%%%%%%%%%%%%%%%%%%%%%%%%%%%%%%%%%%%%%%%%%%%%%%%%%%
%%          AGACSE 2015
%%			LaTeX Template for submitting
%%          an Early Proceedings contribution
%%%%%%%%%%%%%%%%%%%%%%%%%%%%%%%%%%%%%%%%%%%%%%%%%%

\documentclass[12pt, a4paper]{amsart}

% Settings-- -------------------------------
\textwidth=14.5cm
\parskip=6pt
\parindent=0pt

% Packages ---------------------------------
\usepackage{amsfonts, amsmath, amssymb,latexsym}
\usepackage[english]{babel}
\usepackage[utf8]{inputenc}
\usepackage{url}
\usepackage[top=0.5in, left=1in, bottom=1in]{geometry}

\usepackage{mathptmx}
\usepackage{times}
%\usepackage{fontspec}
%\setmainfont{Times New Roman}

%\usepackage{epsfig}

\usepackage{graphicx}        % standard LaTeX graphics tool

\usepackage{amsmath,amscd, amsfonts}

\newtheorem{thm}{Theorem}[section]

\newtheorem{prop}[thm]{Proposition}
\newtheorem{defn}[thm]{Definition}

\newcommand{\Cl}{\mathop{\mathrm{Cl}}}

\usepackage{lineno}
\usepackage{tikz}
\usepackage{pgfplots}
\usepgflibrary{shapes.geometric} % LTEX and plain TEX and pure pgf 
\usepgflibrary[shapes.geometric] % ConTEXt and pure pgf 
\usetikzlibrary{shapes.geometric} % LATEX and plain TEX when using Tik Z 
\usetikzlibrary[shapes.geometric] % ConTEXt when using Tik Z 

%\bibstyle{plain}

% Author macros----------------------------------------------------------
%% \begin{Q} ... \end{Q} : Quotations, indented both sides,  \small size
%
{\begin{list}{}{\setlength{\leftmargin}{18pt}\setlength{\rightmargin}{12pt}}
 \item[]\ignorespaces\small}
 {\unskip\end{list}}

%% \exclude{...}: The text ... is not compiled
\newcommand{\exclude}[1]{} %

\begin{document}

%% ---
\title[The  $E_8$ geometry from a Clifford perspective]
{The  $E_8$ geometry from a Clifford perspective}

%\thanks{Partially supported by ... }%
%\date{\today}%

\maketitle

\vspace{-24pt}

% Author/s, and address/es
\begin{center}
\author{{\bf Pierre-Philippe Dechant}}
%\author{{\bf Pierre-Philippe Dechant}\,$^a$}

\small

%$^a$\, Departments of Mathematics and Biology\\
Departments of Mathematics and Biology\\
York Centre for Complex Systems Analysis\\
University of York, Heslington YO10 5GE, United Kingdom\\
ppd22@cantab.net [presenter, corresponding]
\end{center}

\thispagestyle{empty}

%\date{\today}%

%%%%%%%%%%%%%%%%%%%%%%%%%%%%%%%%%%%%%%%%%%%%%%%%%%%%%%%%%%%%
%%%%%%%%%%%%%%%%%%%%%%%%%%%%%%%%%%%%%%%%%%%%%%%%%%%%%%%%%%%%

\begin{abstract}
This paper considers the geometry of $E_8$ from a Clifford point of view in three complementary ways.
Firstly, in earlier work, I had shown how to construct the four-dimensional exceptional root systems from the 3D root systems using Clifford techniques, by constructing them in the 4D even subalgebra of the 3D Clifford algebra; for instance the icosahedral root system $H_3$ gives rise to the largest (and therefore exceptional) non-crystallographic root system $H_4$. 
Arnold's trinities and the McKay correspondence then hint that there might be an indirect connection between the icosahedron and $E_8$. 
Secondly, in a related construction, I have now made this connection explicit for the first time:
in the 8D Clifford algebra of 3D space the $120$ elements of the icosahedral group $H_3$ are doubly covered by $240$ 8-component objects, which endowed with a `reduced inner product' are exactly the $E_8$ root system.
It was previously known that $E_8$ splits into $H_4$-invariant subspaces, and we discuss the folding construction relating the two pictures.
This folding is a partial version of the one used for the construction of the Coxeter plane, so thirdly we discuss the geometry of the Coxeter plane in a Clifford algebra framework.
We advocate the complete factorisation of the Coxeter versor in the Clifford algebra into exponentials of bivectors describing rotations in orthogonal planes with the rotation angle giving the correct exponents, which gives much more geometric insight than the usual approach of complexification and search for complex eigenvalues.
In particular, we explicitly find these factorisations for the 2D, 3D and 4D root systems, $D_6$ as well as $E_8$, whose Coxeter versor factorises as $W=\exp(\frac{\pi}{30}B_C)\exp(\frac{11\pi}{30}B_2)\exp(\frac{7\pi}{30}B_3)\exp(\frac{13\pi}{30}B_4)$.
This explicitly describes 30-fold rotations in 4 orthogonal planes with the correct exponents $\{1, 7, 11, 13, 17, 19, 23, 29\}$ arising completely algebraically from the factorisation.

\end{abstract}

\begin{figure}
	\begin{center}

\includegraphics[width=12cm]{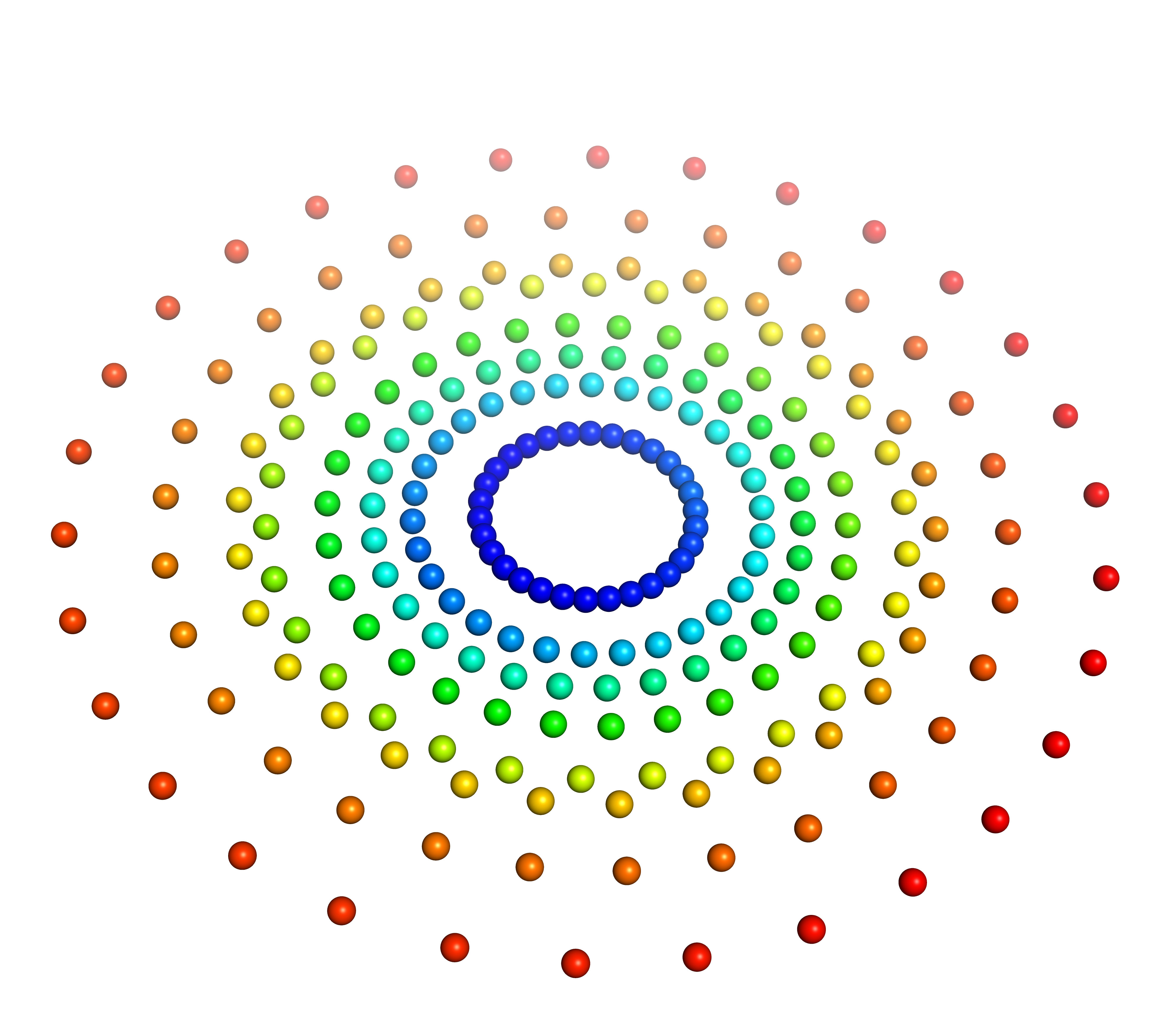}

\end{center}
\caption[$E_6^+$]{The projection of the $240$ 8D root vectors in the $E_8$ root system into the Coxeter plane is a popular visualisation of $E_8$ as 8 concentric circles of $30$. }
\label{figE8CoxPl}
\end{figure}

\section{Introduction}

At the last AGACSE in 2012, I presented a new link between the geometries of three and four dimensions; 
in particular, I have shown that any reflection group in three dimensions induces a corresponding symmetry group in four dimensions, via a new Clifford spinor construction \cite{Dechant2012AGACSE,Dechant2012Induction}.
This connection had been overlooked for centuries (usually one assumes the larger groups are more fundamental) but the new construction derives all the exceptional phenomena in 4D -- $D_4$, $F_4$ and $H_4$ -- via induction from the 3D symmetry groups of the Platonic solids $A_3$, $B_3$ and $H_3$. This spinor construction also explains the unusual 4D automorphism groups. The 4D groups in fact do not contain anything that is not already present in the 3D groups they are induced from \cite{Dechant2012CoxGA}.

This begs the obvious question of whether one can derive the largest exceptional geometry $E_8$, the holy grail of mathematics and physics, in a Clifford algebra approach as well.  I have now found a construction that constructs the $E_8$ root system (see Fig. \ref{figE8CoxPl}) in analogy to the above construction going 
from 3D to 4D \cite{Dechant2016Birth}. This previous construction worked along the following lines: each 3D root system -- which generates the corresponding reflection symmetry group via the reflections in the hyperplanes orthogonal to the root vectors -- allows one to form a group of spinors by multiplying together even numbers of the reflection generating root vectors in the Clifford algebra; these spinors have four components (the usual $1$ scalar and $3$ bivector components) and one can endow these with a 4D Euclidean metric. One can then show that the resulting set of spinors reinterpreted as 4D vectors satisfies the axioms for a 4D root system, thereby generating a symmetry group in 4D. 

For instance, starting with the root system $H_3$ which generates the symmetries of the icosahedron, one generates a group $2I$ of $120$ spinors (the binary icosahedral group) via multiplication in the Clifford algebra. These are precisely the $120$ root vectors of the 4D root system $H_4$ once reinterpreted using the 4D Euclidean metric. $H_4$ is exceptional and the largest non-crystallographic Coxeter group; it also has the exceptional automorphism group $2I\times 2I$ \cite{Koca2006F4,Koca2003A4B4F4}. However, this is trivial to see in terms of the spinor group $2I$, as it must be trivially closed under left and right multiplication via group closure.

Taking the full set of pinors (i.e. not restricting to even products of root vectors) of the icosahedron, one generates a group (doubly covering the $120$ elements of $H_3$) of $240$ pinors with 8 components, as befits a group of multivectors in 3D. I have now been able to show that these $240$ pinors are precisely the $240$ roots of $E_8$ by using a reduced inner product, thereby generating the exceptional group $E_8$ \cite{Dechant2016Birth}. 

It is extraordinary that the exceptional geometry $E_8$ has been hiding in the shadows of icosahedral geometry for millennia, without anyone noticing. As with the 4D induction construction, this discovery seems only possible in Clifford algebra. There is much prejudice against the usefulness of Clifford algebras (since they have matrix representations) and usually matrix methods are equivalent if less insightful -- but the 4D and 8D induction constructions are to my knowledge the only results that {require} Clifford algebra and were completely invisible to standard matrix methods.

This paper is structured in the following way. After some background and basic definitions  in Section \ref{background}, we summarise the general result that   any 3D reflection group induces a 4D symmetry group, via their root systems (Section \ref{sec_pin}).  In terms of their 3D spinors,  the Platonic root systems $(A_3, B_3, H_3)$  thus induce all the exceptional 4D root systems $(D_4, F_4, H_4)$ with this spinorial approach also explaining their unusual symmetry groups.  We treat as an illustrative example the case of the icosahedral group $H_3$ whose spinor group is the binary icosahedral group $2I$.  It doubly covers the $60$ icosahedral rotations in terms of $120$ spinors, but when thought of as a collection of 4-component objects is precisely $H_4$, the exceptional largest non-crystallographic  root system. Arnold's trinities contain the above Platonic root systems $(A_3, B_3, H_3)$ as well as the exceptional Lie groups $(E_6, E_7, E_8)$. They are connected indirectly via various intermediate trinities as well as more explicitly via the new spinor construction in combination with the McKay correspondence. This therefore hints that the icosahedron may be indirectly connected with $E_8$. Section \ref{birth} makes this explicit for the first time via  a  new, direct construction  \cite{Dechant2016Birth}, which explicitly constructs the $240$ roots  $E_8$  from the $240$ pinors that doubly cover the $120$ elements of the icosahedral group in the Clifford algebra of 3D. Thus \textbf{all} the exceptional root systems can  be seen to be contained in the geometry of 3D via the Platonic symmetries and the Clifford algebra of 3D. This offers a completely new way of viewing these in terms of spinorial geometry. Section \ref{sec_spin} discusses $H_4$ as a rotational subgroup of $E_8$, and thus affords a second way of viewing $H_4$ more naturally as a group of rotations rather than reflections (as is the standard view in the Coxeter picture). This construction of $H_4$ from $E_8$ is in fact a partial folding or 4-fold colouring of the $E_8$ diagram, whilst a complete folding leads to the Coxeter plane. We therefore finish by discussing the geometry of the Coxeter plane of various root systems, notably $E_8$, and point out which advantages a Clifford algebraic view has to offer over the naive standard view which involves complexifying the real geometry: in Clifford algebra the complex eigenvalues in fact arise naturally as rotations in mutually orthogonal eigenplanes of the Coxeter element (Section \ref{Coxeterplane}) with the bivectors of the planes providing the relevant complex structures; we conclude in Section \ref{concl} and again stress that for root systems and reflection groups Clifford algebra is the most natural framework.

\section{Background}\label{background}
Lie groups are a ubiquitous in mathematics and physics. In particular, the largest exceptional Lie group $E_8$ is of fundamental importance in String Theory and Grand Unified Theories and is thus arguably the single most important symmetry group in modern theoretical physics. Lie groups are continuous (group manifolds) but they are closely related to their corresponding Lie algebras whose  non-trivial part in turn is described by a root system: 

\begin{defn}[Root system] \label{DefRootSys}
A \emph{root system} is a collection $\Phi$ of non-zero (root)  vectors $\alpha$ spanning an $n$-dimensional Euclidean vector space $V$ endowed with a positive definite bilinear form, which satisfies the  two axioms:
\begin{enumerate}
\item $\Phi$ only contains a root $\alpha$ and its negative, but no other scalar multiples: $\Phi \cap \mathbb{R}\alpha=\{-\alpha, \alpha\}\,\,\,\,\forall\,\, \alpha \in \Phi$. 
\item $\Phi$ is invariant under all reflections corresponding to root vectors in $\Phi$: $s_\alpha\Phi=\Phi \,\,\,\forall\,\, \alpha\in\Phi$. 
The reflection $s_\alpha$ in the hyperplane with normal $\alpha$ is given by $$s_\alpha: \lambda\rightarrow s_\alpha(\lambda)=\lambda - 2\frac{(\lambda|\alpha)}{(\alpha|\alpha)}\alpha,$$\label{refl} where $(\cdot \vert \cdot)$ denotes the inner product on $V$.
\end{enumerate}
\end{defn}

For a crystallographic root system, a subset $\Delta$ of $\Phi$, called \emph{simple roots} $\alpha_1, \dots, \alpha_n$, is sufficient to express every element of $\Phi$ via $\mathbb{Z}$-linear combinations with coefficients of the same sign. 
$\Phi$ is therefore  completely characterised by this basis of simple roots. In the case of the non-crystallographic root systems $H_2$, $H_3$ and $H_4$, the same holds for the extended integer ring $\mathbb{Z}[\tau]=\lbrace a+\tau b| a,b \in \mathbb{Z}\rbrace$, where $\tau$ is   the golden ratio $\tau=\frac{1}{2}(1+\sqrt{5})=2\cos{\frac{\pi}{5}}$, and $\sigma$ is its Galois conjugate $\sigma=\frac{1}{2}(1-\sqrt{5})$ (the two solutions to the quadratic equation $x^2=x+1$).  
 For the crystallographic root systems, the classification in terms of Dynkin diagrams essentially follows the one familiar from Lie groups and Lie algebras, as their Weyl groups are  the crystallographic Coxeter groups. A mild generalisation to so-called Coxeter-Dynkin diagrams is necessary for the non-crystallographic root systems:
\begin{defn}[Coxeter-Dynkin diagram and Cartan matrix] 
	A graphical representation of the geometric content of a root system is given by \emph{Coxeter-Dynkin diagrams}, where nodes correspond to simple roots, orthogonal roots are not connected, roots at $\frac{\pi}{3}$ have a simple link, and other angles $\frac{\pi}{m}$ have a link with a label $m$. 
The  \emph{Cartan matrix} of a set of simple roots $\alpha_i\in\Delta$ is defined as the matrix
	$A_{ij}=2{(\alpha_i\vert \alpha_j)}/{(\alpha_j\vert \alpha_j)}$.
\end{defn}
For instance, the root system of the icosahedral group $H_3$ has one link labelled by $5$ (via the above relation $\tau=2\cos{\frac{\pi}{5}}$), as does its four-dimensional analogue $H_4$.

The reflections in the second axiom of the root system generate a reflection group. A Coxeter group is a mathematical abstraction of the concept of a reflection group via  involutive  generators (i.e. they square to the identity, which captures the idea of a reflection), subject to mixed relations that represent $m$-fold rotations (since two successive reflections generate a rotation in the plane defined by the two roots).
\begin{defn}[Coxeter group] A \emph{Coxeter group} is a group generated by a set of involutive generators $s_i, s_j \in S$ subject to relations of the form $(s_is_j)^{m_{ij}}=1$ with $m_{ij}=m_{ji}\ge 2$ for $i\ne j$. 
\end{defn}
The  finite Coxeter groups have a geometric representation where the involutions are realised as reflections at hyperplanes through the origin in a Euclidean vector space $V$, i.e. they are essentially  just the classical reflection groups. In particular, then the abstract generator $s_i$ corresponds to the simple {reflection}
$s_i: \lambda\rightarrow s_i(\lambda)=\lambda - 2\frac{(\lambda|\alpha_i)}{(\alpha_i|\alpha_i)}\alpha_i$
 in the hyperplane perpendicular to the  simple {root } $\alpha_i$.
The action of the Coxeter group is  to permute these root vectors, and its  structure is thus encoded in the collection  $\Phi\in V$ of all such roots, which in turn form a root system.

It is thus straightforward to move between those four related concepts of Lie groups/algebras, root systems and Coxeter groups and we will usually not make a distinction -- with the exception of non-crystallographic root systems such as  $H_3$ (which generates icosahedral symmetry) and its 4D analogue $H_4$: their non-crystallographic nature means that there is no associated Lie algebra, since going to the Lie algebra level in the Kac-Moody approach the Cartan matrix entries appear in the Chevalley-Serre relations as powers of the generators, which have to be integers rather than $\mathbb{Z}[\tau]$ integers.  

The $E_8$ root system is thus usually thought of as an exceptional (i.e. there are no corresponding symmetry groups in arbitrary dimensions) phenomenon of  eight-dimensional geometry, with no connection to the 3D geometry we inhabit. Surprisingly, the eight dimensions of 3D Clifford algebra actually allow $E_8$ to fit into our 3D geometry; likewise all 4D exceptional root systems arise within 3D geometry from the Platonic symmetries, unveiling them all as intrinsically 3D phenomena. This opens up a revolutionary way of viewing exceptional higher-dimensional geometries in terms of 3D spinorial geometry.

We employ a Clifford algebra framework, which via the geometric product by $xy=x\cdot y+x \wedge y$ affords a uniquely simple prescription for performing reflections $\lambda\rightarrow s_i(\lambda)=\lambda - 2(\alpha \cdot\lambda) \alpha =-\alpha \lambda \alpha$ (assuming unit normalisation). This yields a double cover of the reflections (since $\alpha$ and $-\alpha$ give the same reflection) and thus via the Cartan-Diedonn\'e theorem (which allows one to express any orthogonal transformation as the product of reflections) one gets a double cover  of all orthogonal transformations $\underbar{A}: v\rightarrow  v'=\underbar{A}(v)=\pm{\tilde{A}vA}$, in spaces of any dimension and signature, by products of unit vectors $A=\alpha_1\alpha_2\dots \alpha_k$ called versors. We also call even versors spinors, as they doubly cover the special orthogonal transformations, and general versors pinors, as they doubly cover the  orthogonal transformations.   

The non-exceptional Lie groups have been realised as spin groups  in Geometric Algebra \cite{Doran1993LG}, and Lie algebras as bivector algebras; here we offer a Clifford geometric construction of all the exceptional phenomena via their root systems. Of course, $H_4$ does not even have an associated Lie algebra and Lie group and therefore could not be constructed as a spin group or bivector algebra. We therefore advocate the root system as the most convenient, insightful and fundamental concept:  
for a root system, the quadratic form mentioned in the definition can always be used to enlarge the $n$-dimensional vector space $V$ to the corresponding $2^n$-dimensional Clifford algebra. The Clifford algebra  is therefore a very natural object to consider in this context, as its unified structure simplifies many problems both conceptually and computationally, rather than applying the linear structure of the space and the inner product separately. In particular, it provides the above (s)pinor double covers of the (special) orthogonal transformations, as well as geometric quantities that serve as unit imaginaries. We will see the benefits of those throughout the paper. We will largely be working with the Clifford algebra of 3D generated by three orthogonal unit vectors $e_1$, $e_2$ and $e_3$, which is itself an eight-dimensional vector space consisting of the elements
$$
  \underbrace{\{1\}}_{\text{1 scalar}} \,\,\ \,\,\,\underbrace{\{e_1, e_2, e_3\}}_{\text{3 vectors}} \,\,\, \,\,\, \underbrace{\{e_1e_2=Ie_3, e_2e_3=Ie_1, e_3e_1=Ie_2\}}_{\text{3 bivectors}} \,\,\, \,\,\, \underbrace{\{I\equiv e_1e_2e_3\}}_{\text{1 trivector}}.
$$

\section{The general spinor induction construction: $H_4$ as a group of rotations rather than reflections I, trinities and McKay correspondence}\label{sec_pin}

In this section we prove that any 3D root system yields a 4D root system via the spinor group obtained by multiplying together root vectors in the Clifford algebra  \cite{Dechant2012Induction}.
\begin{prop}[$O(4)$-structure of spinors]\label{HGA_O4}
The space of $\Cl(3)$-spinors $R=a_0+a_1e_2e_3+a_2e_3e_1+a_3e_1e_2$ can be endowed with a \emph{4D Euclidean norm} $|R|^2=R\tilde{R}=a_0^2+a_1^2+a_2^2+a_3^2$ induced by the  \emph{inner product} $(R_1,R_2)=\frac{1}{2}(R_1\tilde{R}_2+R_2\tilde{R}_1)$ between  two spinors $R_1$ and $R_2$. 
\end{prop}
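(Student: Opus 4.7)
The plan is a direct verification in $\Cl(3)$. First I would abbreviate the three basis bivectors as $B_1=e_2e_3$, $B_2=e_3e_1$, $B_3=e_1e_2$, so that a general spinor reads $R=a_0+a_1B_1+a_2B_2+a_3B_3$. Since reversion reverses the order of factors in a product of vectors, each bivector satisfies $\tilde{B_i}=-B_i$, whereas scalars are fixed, giving $\tilde{R}=a_0-a_1B_1-a_2B_2-a_3B_3$ immediately.

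Next I would compute $R\tilde{R}$ by writing it as $(a_0+b)(a_0-b)=a_0^2-b^2$ with $b=\sum a_iB_i$, so that the task reduces to evaluating $b^2$. Using the defining Clifford relations $e_i^2=1$ and $e_ie_j=-e_je_i$ for $i\neq j$, one checks $B_i^2=-1$ and $B_iB_j=-B_jB_i$ for $i\neq j$ (indeed $B_1B_2=e_2e_3e_3e_1=e_2e_1=-B_3$, and cyclically). Expanding $b^2=\sum_i a_i^2B_i^2+\sum_{i\neq j}a_ia_jB_iB_j$, the diagonal terms contribute $-\sum a_i^2$ while the off-diagonal terms cancel in pairs by anticommutativity. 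Hence $R\tilde{R}=a_0^2+a_1^2+a_2^2+a_3^2$, a positive scalar, proving the Euclidean norm formula.

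Finally I would recover the bilinear form by polarisation. Applying the previous identity to $R_1+R_2$ and subtracting $|R_1|^2+|R_2|^2$ produces $R_1\tilde{R_2}+R_2\tilde{R_1}$. That this expression is genuinely a scalar (not a mixed multivector) follows from the general identity $\widetilde{R_1\tilde{R_2}}=\tilde{\tilde{R_2}}\tilde{R_1}=R_2\tilde{R_1}$, so $R_1\tilde{R_2}+R_2\tilde{R_1}$ is invariant under reversion; since reversion fixes scalars and flips bivectors on $\Cl^+(3)$, only the scalar grade of $R_1\tilde{R_2}$ survives and the sum equals $2\langle R_1\tilde{R_2}\rangle_0$. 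Symmetry and bilinearity are then immediate from the definition, and setting $R_1=R_2=R$ reproduces $R\tilde{R}=|R|^2$, confirming that the bilinear form is the polarisation of the Euclidean norm and in particular positive definite.

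There is no serious obstacle; the only substantive point is the cancellation of off-diagonal terms in $b^2$, which is exactly the quaternionic anticommutation property of the basis bivectors — a reminder that $\Cl^+(3)\cong\mathbb{H}$, so that the four-dimensional Euclidean structure on spinors is nothing but the quaternionic norm, reinterpreted on the $2^{n-1}=4$-dimensional even subalgebra of a $2^n=8$-dimensional Clifford algebra.
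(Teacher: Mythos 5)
Your verification is correct and is exactly the direct computation the paper leaves implicit (Proposition \ref{HGA_O4} is stated without proof, being the standard observation that $\Cl^+(3)\cong\mathbb{H}$ carries the quaternionic norm $R\tilde{R}$). The reversion formula, the cancellation of off-diagonal bivector terms in $b^2$, and the polarisation argument showing $(R_1,R_2)$ is scalar-valued are all sound, so nothing is missing.
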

This allows one to  reinterpret the group of 3D spinors generated from a 3D root system as a set of 4D vectors, which in fact can be shown to  satisfy the axioms of a root system as given in Definition \ref{DefRootSys}. 
\begin{thm}[Induction Theorem]\label{HGA_4Drootsys}
Any 3D root system gives rise to a spinor group $G$ which induces a root system in 4D.
\end{thm}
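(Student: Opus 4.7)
The plan is to form the spinor group $G$ in $\Cl(3)$ by multiplying together even numbers of unit root vectors drawn from the given 3D root system, identify $G$ with a subset of the 4D Euclidean space provided by Proposition \ref{HGA_O4}, and verify the two axioms of Definition \ref{DefRootSys} directly there. The set $G$ really is a group: a product of even products is again an even product, the empty product gives the identity, and $\widetilde{\alpha_1\cdots\alpha_{2k}}=\alpha_{2k}\cdots\alpha_1$ is an inverse since $\alpha_i^2=1$. It is finite (it doubly covers the finite 3D rotational subgroup) and spans the 4D spinor space (the scalar $1$ together with spinors $\alpha\beta$ for varying pairs of non-parallel roots populates all three bivector components). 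Every $R\in G$ automatically has unit 4D norm because $R\tilde R=1$.

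For axiom 1 I would first observe that any 3D root system contains a pair of orthogonal roots $\alpha\perp\beta$ (for instance $\alpha_1\perp\alpha_3$ in each of $A_3$, $B_3$, $H_3$), so that $(\alpha\beta)^2=\alpha\beta\alpha\beta=-\alpha^2\beta^2=-1\in G$. Together with unit normalisation this forces $G\cap\mathbb{R}R=\{\pm R\}$ for every $R\in G$, since no other real multiple of a unit vector is itself a unit vector.

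For axiom 2 I would compute the 4D reflection of $R_2$ in the hyperplane orthogonal to $R_1\in G$ explicitly in Clifford language. Using $2(R_1,R_2)=R_1\tilde R_2+R_2\tilde R_1$ from Proposition \ref{HGA_O4} together with $\tilde R_1 R_1=1$, the reflection evaluates to
\[
s_{R_1}(R_2)\;=\;R_2-\bigl(R_1\tilde R_2+R_2\tilde R_1\bigr)R_1\;=\;-\,R_1\,\tilde R_2\,R_1.
\]
Since $\tilde R_2=R_2^{-1}\in G$ and $-1\in G$, the right-hand side is a product of elements of $G$, and group closure immediately yields $s_{R_1}(R_2)\in G$, as required.

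The one genuine obstacle is squeezing the familiar reflection formula through the Clifford identities cleanly: everything hinges on exploiting $\tilde R R=1$ at exactly the right moment so that the cross term $R_2\tilde R_1 R_1$ telescopes to $R_2$ and cancels it, leaving the symmetric sandwich $R_1\tilde R_2 R_1$. The resulting picture is pleasing and, I think, the real content of the theorem: the 4D Coxeter reflection on the spinor space is nothing other than minus a Clifford conjugation, so closure of $G$ under Clifford multiplication automatically guarantees closure of the induced 4D root system under its own reflections.
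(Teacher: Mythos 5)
Your proposal is correct and follows essentially the same route as the paper: both verify the two root-system axioms directly on the set of unit spinors, with the key identity $s_{R_1}(R_2)=R_2-2(R_1,R_2)/(R_1,R_1)\,R_1=-R_1\tilde{R}_2R_1$ reducing closure under 4D reflections to closure of the spinor group $G$ under Clifford multiplication (using $-R,\tilde{R}\in G$). Your explicit justification that $-1\in G$ via a pair of orthogonal roots is a nice touch that the paper's proof leaves implicit in the phrase ``spinors provide a double cover of rotations.''
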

\begin{proof}
Check the two axioms for the root system $\Phi$ consisting of the set of 4D vectors given by the 3D spinor group:
\begin{enumerate}
\item By construction, $\Phi$ contains the negative of a root $R$ since spinors provide a double cover of rotations, i.e. if $R$ is in a spinor group $G$, then so is $-R$ , but no other scalar multiples (normalisation to unity). 
\item $\Phi$ is invariant under all reflections with respect to the inner product $(R_1,R_2)$ in Proposition \ref{HGA_O4} since $R_2'=R_2-2(R_1, R_2)/(R_1, {R}_1) R_1=-R_1\tilde{R}_2R_1\in G$ for $R_1, R_2 \in G$ by the closure property of the group $G$ (in particular $-R$ and $\tilde{R}$ are in $G$ if $R$ is). 
\end{enumerate}
\end{proof}

Since the irreducible 3D root systems are $(A_3, B_3, H_3)$, the induction construction also yields three induced root systems in 4D. These are in fact the exceptional root systems in 4D $(D_4, F_4, H_4)$. Both sets of three are in fact amongst Arnold's trinities following his observation that  $(\mathbb{R},\mathbb{C},\mathbb{H})$ \cite{Arnold1999symplectization,Arnold2000AMS} form a basic unit of three which can be extended to analogous sets of three such as the corresponding projective spaces, Lie algebras of $E$-type $(E_6, E_7, E_8)$, spheres, Hopf fibrations etc.
This is the first very tentative hint that the icosahedral group $H_3$ might be related to the exceptional geometry $E_8$. 
 Arnolds original link between $(A_3, B_3, H_3)$ and $(D_4, F_4, H_4)$  is similarly extremely convoluted/indirect, whilst our construction presents a novel  direct link between the two; we will first make the $H_3$ and $E_8$ connection more suggestive below, before making it explicit for the first time. 
These root systems are intimately linked to the Platonic solids \cite{Dechant2013Platonic}. There are 5 in three dimensions and 6 in four dimensions: $A_3$ describes the reflection symmetries of the tetrahedron, $B_3$  those of the cube and octahedron (which are dual under the exchange of midpoints of faces and vertices), and $H_3$ describes the symmetries of the dual pair icosahedron and dodecahedron (the rotational subgroup is denoted by $I= A_5$), whilst the 4D Coxeter groups describe the symmetries of the 4D Platonic solids. But this time the connection is much more immediate: the Platonic solids   are actually root systems themselves (or duals thereof). $D_4$ is the $24$-cell (self-dual), an analogue of the tetrahedron, which is also related to the $F_4$ root system. The $H_4$ root system is the Platonic solid the $600$-cell with its dual, the $120$-cell  (also Platonic), having the same symmetry.  $A_1^3$ generates  $A_1^4$, which constitutes the $16$-cell -- its dual is the $8$-cell, both are Platonic solids.  There is thus an abundance of root systems in 4D giving the Platonic solids, which is essentially due to the accidentalness of the spinor induction theorem, compared to arbitrary dimensions where the only root systems and Platonic solids are  $A_n$ ($n$-simplex), $B_n$  ($n$-hypercube and $n$-hyperoctahedron) and $D_n$. 
In fact the only 4D Platonic solid that is not equal or dual to a root system is the $5$-cell with symmetry group $A_4$, which of course could not be a root system, as its odd number ($5$) of vertices violates the first root system axiom. 

The induced root systems $(D_4, F_4, H_4)$ are precisely the exceptional ones in 4D, where we count $D_4$ as exceptional because of its exceptional triality symmetry (permutation symmetry of the three legs in the diagram accidental in 4D). It is of great importance in string theory, showing the equivalence of the Ramond-Neveu-Schwarz and the Green-Schwarz strings. $F_4$ is the only $F$-type root system, and $H_4$ is the largest non-crystallographic root system. They therefore naturally form a trinity. 

On top of  the exceptional nature of these root systems (their existence), they also have very unusual automorphism groups. This is readily shown via the above spinor construction: 
\begin{thm}[Spinorial symmetries]\label{HGAsymmetry}
A root system induced through the Clifford spinor construction via a binary polyhedral spinor group $G$ has an automorphism group that trivially contains two factors of the respective spinor group $G$ acting from the left and from the right.
\end{thm}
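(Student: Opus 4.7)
The plan is to exhibit two commuting faithful actions of $G$ on $\Phi$ by orthogonal automorphisms, namely left and right multiplication inside the Clifford algebra. Since under the identification of Proposition \ref{HGA_O4} the induced root system $\Phi$ is set-theoretically nothing but the spinor group $G$, closure of $G$ under multiplication makes these the natural candidates: group closure $R_1 G = G$ and $G R_1 = G$ for any $R_1\in G$ immediately implies that left and right multiplication by $R_1$ permute the elements of $\Phi$.

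The real work is to check orthogonality. First I would observe that the inner product $(A,B)=\tfrac{1}{2}(A\tilde{B}+B\tilde{A})$ of Proposition \ref{HGA_O4} is automatically a scalar: since $\widetilde{A\tilde{B}}=B\tilde{A}$, the sum equals its own reversion, and on even-grade elements of $\Cl(3)$ reversion negates the bivector part, forcing that part to vanish. Left multiplication by $R_1\in G$ then obeys
\begin{equation*}
(R_1 A, R_1 B) = \tfrac{1}{2}\bigl(R_1 A \tilde{B}\tilde{R_1} + R_1 B \tilde{A}\tilde{R_1}\bigr) = R_1 (A,B) \tilde{R_1} = (A,B)\, R_1 \tilde{R_1} = (A,B),
\end{equation*}
using that the scalar $(A,B)$ commutes with $R_1$ and that spinors in $G$ are unit: $R_1\tilde{R_1}=1$. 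The right-multiplication calculation is even shorter, since the factors $R_1,\tilde{R_1}$ produced by reversion sit adjacent and collapse at once: $(A R_1, B R_1) = \tfrac{1}{2}(A R_1 \tilde{R_1}\tilde{B} + B R_1 \tilde{R_1}\tilde{A}) = (A,B)$.

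With orthogonality in hand, associativity of the Clifford product guarantees that the left and right actions commute, so together they define a homomorphism $G\times G \to \Aut(\Phi)$. Each individual factor is faithful: if $R_1 R = R_1' R$ for all $R\in G$, setting $R=1$ yields $R_1=R_1'$, and similarly on the right. This is exactly the assertion of the theorem.

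I do not anticipate a substantive obstacle; the only mild subtlety, not needed for the statement as phrased, is that the combined $G\times G$ map is not quite injective, because $(-1,-1)$ and more generally diagonal central pairs act trivially. This is why the literature often writes $\Aut$ as a quotient such as $(2I\times 2I)/\mathbb{Z}_2$ in the $H_4$ case, but the word \emph{trivially} in the theorem statement refers to the unreduced containment supplied directly by the two embeddings $R_1\mapsto L_{R_1}$ and $R_1\mapsto R_{R_1}$.
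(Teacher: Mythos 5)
Your proof is correct and takes essentially the same route as the paper, which offers no formal argument beyond the observation that $\Phi$, identified set-theoretically with the spinor group $G$, is trivially closed under left and right multiplication by group closure. Your verification that these multiplications are isometries of the induced inner product (via the scalarity of $(A,B)$ and $R_1\tilde{R}_1=1$) simply makes explicit the one step the paper leaves implicit, and your remark on the central kernel matches the $(2I\times 2I)/\mathbb{Z}_2$-type quotients in the cited literature.
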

This systematises many case-by-case observations on the structure of the automorphism groups \cite{Koca2006F4,Koca2003A4B4F4} (for instance, the automorphism group of the $H_4$ root system is $2I\times 2I$ -- in the spinor picture, it is not surprising that $2I$ yields both the root system and the two factors in the automorphism group), and shows that all of the 4D geometry is already contained in 3D \cite{Dechant2012CoxGA} in the following sense. 
In terms of quaternionic representations of a 4D root system (e.g. $H_4$), the 3D root system ($H_3$) is often remarked in the literature to consist precisely of the pure quaternions, i.e. those without a real part, and the full 4D group can be generated from these under quaternion multiplication. This is very poorly understood in the literature: in our picture the spinors are just isomorphic to the quaternions whilst when the inversion  $\pm I$ is contained in the full group, one can trivially Hodge dualise vectors to bivectors (pure quaternions).  However, this statement is not true when the inversion is not contained in the group (e.g.  for $A_3$). Conversely, the spinorial induction construction still works perfectly well yielding $D_4$. I.e. the fact that the 3D group is generated by the subset of the pure quaternions amongst the 4D group under quaternion multiplication is just a corollary of spinor induction when the inversion is contained in the group. The literature has it `backwards' deriving the 3D root system from the 4D root system (under inconsistent conditions), whilst in our picture the 3D root systems are more fundamental as they allow us to construct the 4D root system without any further information. Moreover, the `quaternionic generators' generating the 4D groups via quaternion multiplication cited in the literature are easily seen to just be the products of pairs of 3D simple roots (spinors) $\alpha_1\alpha_2$ and $\alpha_2\alpha_3$. Thus the 4D group clearly does not contain anything that was not already contained in the 3D group, and we therefore argue that 3D root systems and spinor induction are fundamental rather than the 4D root systems. 

We consider the examples of the inductions of $A_1^3\rightarrow A_1^4$ and $H_3\rightarrow H_4$ in more detail. For the former, the $6$ roots can be chosen as $\pm e_1, \pm e_2, \pm e_3$. Pairwise products yield the $8$ spinors $\pm 1, \pm e_1e_2, \pm e_2e_3, \pm e_3e_1$. 3D spinors have four components ($1$, $e_1e_2$, $e_2e_3$, $e_3e_1$), such that these $8$ spinors give the $8$ vertices of the (4D root system and Platonic solid) $16$-cell 
$$(\pm 1, 0, 0, 0) \text{ (8 permutations).}$$
 We now also construct the spinor group generated by the simple roots of $H_3$, which we take as $$\alpha_1=e_2, \alpha_2=-\frac{1}{2}((\tau -1)e_1+e_2+\tau e_3),\text{ and }\alpha_3=e_3.$$ Under  multiplication with the geometric product they generate a group of $240$  pinors doubly covering the $120$ elements of $H_3$. Its even subgroup consists of $120$ spinors doubly covering the rotational subgroup $A_5$, e.g. $\alpha_1\alpha_2=-\frac{1}{2}(1-(\tau -1)e_1e_2+\tau e_2e_3)$ and $\alpha_2\alpha_3=-\frac{1}{2}(\tau-(\tau -1)e_3e_1+e_2e_3)$. Taking the components of these $120$ spinors in 4D yields exactly the  $H_4$ root system
$$(\pm 1, 0, 0, 0) \text{ (8 permutations) }$$
$$\frac{1}{2}(\pm 1, \pm 1, \pm 1, \pm 1) \text{ (16 permutations) }$$
$$\frac{1}{2}(0, \pm 1, \pm \sigma, \pm \tau) \text{ (96 even permutations) }.$$
 This is very surprising from a Coxeter perspective, as one usually thinks of $H_3$ as a subgroup of $H_4$, and therefore of $H_4$ as more fundamental. However,  we have seen that $H_4$ does not actually contain anything that is not already in $H_3$, which makes $H_3$ more fundamental \cite{Dechant2012CoxGA}. 
From a Clifford perspective it is not at all surprising to find this group of $120$ spinors (the binary icosahedral group $2I$)  since it is well-known that Clifford algebra provides a simple construction of the Spin groups. However, this is extremely surprising  from the conventional Coxeter and Lie group point of view. 

It is convenient to have all the four different types of polyhedral groups (chiral, full, binary, pin) in a unified framework within the Clifford algebra, rather than using $SO(3)$ matrices for the rotations and then having to use $SU(2)$ matrices for the binary groups, as one can perform all the different group operations with multivectors in the same Clifford algebra. For instance, the spinor group $2I$ consists of $120$ elements and $9$ conjugacy classes, as one can easily confirm by explicit computation. 
 $A_5$  has five conjugacy classes  and is of order $60$, which implies that it has five irreducible representations of dimensions  $1$, $3$, $\bar{3}$, $4$ and $5$. The nine conjugacy classes of the binary icosahedral group $2I$ of order $120$ therefore mean that it has a further four irreducible spinorial representations $2_s$, $2_s'$, ${4_s}$ and ${6_s}$. 
This binary icosahedral group has a mysterious two-fold connection with the affine Lie algebra $E_8^+$  via the so-called McKay correspondence \cite{Mckay1980graphs}, which in fact applies to all other binary polyhedral groups and the affine Lie algebras of $ADE$-type (c.f. Figure \ref{figMcKay}): 
 firstly, we can define a graph by assigning a node to each of the nine irreducible representation of the binary icosahedral group where we connect nodes according to its tensor product structure: each irreducible representation is represented by a node and it is only connected to other nodes corresponding to those irreducible representations that are contained in its tensor product with the irreducible representation $2_s$. For instance, tensoring the trivial representation $1$ with $2_s$ trivially gives $1\otimes 2_s=2_s$ and thus  $1$ is only connected to $2_s$; $2_s\otimes 2_s=1+3$, such that $2_s$ is connected to $1$ (as we know) and also to $3$, and so on. The graph that is built up in this way is precisely the Dynkin diagram of affine $E_8$ (Figure \ref{figE6aff}). Secondly the order of the Coxeter element  (the product of all the simple reflections $\alpha_1\dots \alpha_8$), the Coxeter number $h$, is $30$ for $E_8$, which is also exactly the sum of the dimensions of the irreducible representations of $2I$, $\sum d_i$. As already stated, these both extend to a correspondence between  all binary polyhedral groups and the $ADE$-type affine Lie algebras.

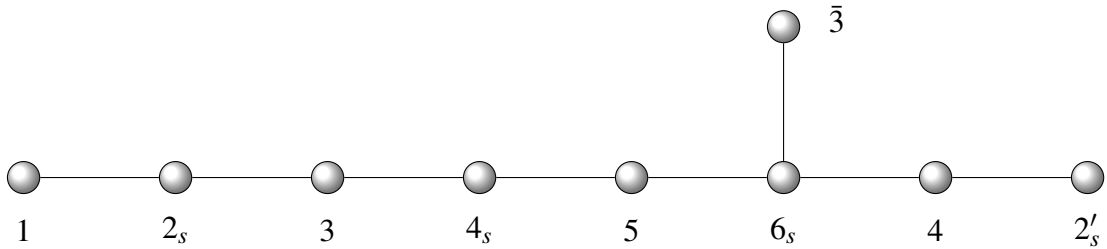
\begin{figure}
	\begin{center}
\begin{tikzpicture}[%scale=0.5,
knoten/.style={        circle,      inner sep=.15cm,        draw}
]
\node at (-1,.7) (knoten0) [knoten,   color=white!0!black, ball color=white ] {};
\node at  (1,.7) (knoten1) [knoten,   color=white!0!black, ball color=white ] {};
\node at  (3,.7) (knoten2) [knoten,   color=white!0!black, ball color=white ] {};
\node at  (5,.7) (knoten3) [knoten,   color=white!0!black, ball color=white ] {};
\node at  (7,.7) (knoten4) [knoten,   color=white!0!black, ball color=white ] {};
\node at  (9,.7) (knoten6) [knoten,   color=white!0!black, ball color=white ] {};
\node at  (11,.7) (knoten7) [knoten,   color=white!0!black, ball color=white ] {};
\node at  (13,.7) (knoten8) [knoten,   color=white!0!black, ball color=white ] {};
\node at  (9,2.7) (knoten9) [knoten,   color=white!0!black, ball color=white ] {};
%\node at (3,4.7) (knoten6) [knoten,  color=white!0!black] {};

\node at  (-1,0) (alpha0)  {$1$};%{$\alpha_0$};
\node at  (1,0)  (alpha1) {$2_s$};%{$\alpha_1$};
\node at  (3,0)  (alpha2) {$3$};%{$\alpha_2$};
\node at  (5,0)  (alpha3) {$4_s$};%{$\alpha_3$};
\node at  (7,0)  (alpha4) {$5$};%{$\alpha_4$};
\node at  (9,0)  (alpha5) {$6_s$};%{$\alpha_3$};
\node at  (11,0)  (alpha6) {$4$};%{$\alpha_4$};

\node at (9.7,2.8)  (alpha7) {$\bar{3}$};%{$\alpha_7$};
\node at (13,0) (alpha8) {$2_s'$};%{$\alpha_8$};

\path  (knoten0) edge (knoten1);
\path  (knoten1) edge (knoten2);
\path  (knoten2) edge (knoten3);
\path  (knoten3) edge (knoten4);
%\path  (knoten2) edge (knoten5);
\path  (knoten4) edge (knoten6);
\path  (knoten6) edge (knoten9);
\path  (knoten6) edge (knoten7);
\path  (knoten7) edge (knoten8);

\end{tikzpicture} 
\end{center}
\caption[$E_6^+$]{The graph depicting the tensor product structure of the binary icosahedral group $2I$ is the same as the Dynkin diagram of  affine  $E_8$. }
\label{figE6aff}
\end{figure}

We can therefore extend the connection between  $(A_3, B_3, H_3)$ and $(2T, 2O, 2I)$ via the McKay correspondence which links  $(2T, 2O, 2I)$ and $(E_6, E_7, E_8)$ to a new connection between $(A_3, B_3, H_3)$ and $(E_6, E_7, E_8)$ via Clifford spinors, which does not seem to be known. In particular, $(12, 18, 30)$ is one of the two connections in the McKay correspondence denoting the Coxeter number of the affine Lie algebra as well as the sum of the dimensions of the irreducible representations of the binary polyhedral group. 
We note that $(12, 18, 30)$ is more fundamentally the number of roots $\Phi$ in the 3D root systems $(A_3, B_3, H_3)$, which therefore feeds all the way through to the binary polyhedral groups and via the McKay correspondence to the affine Lie algebras. Our construction therefore makes deep connections between trinities, and puts the McKay correspondence into a wider framework, as shown in Figure \ref{figMcKay}. It is worth noting that the affine Lie algebra and the 4D root system trinities have identical Dynkin diagram symmetries: $D_4$ and $E_6^+$ have triality $S_3$, $F_4$ and $E_7^+$ have an $S_2$ symmetry and $H_4$ and $E_8^+$ only have $S_1$, but are intimately related as explained in Section \ref{sec_spin}.  This therefore again suggests that there is a link between the icosahedron and $E_8$, which is slightly more explicit than just from observing the trinities. In the next section we will show a new, completely explicit direct connection within the Clifford algebra of 3D by identifying the $240$ roots of $E_8$ with the $240$ pinors doubly covering the elements of the icosahedral group $H_4$ (right of Figure \ref{figMcKay}) \cite{Dechant2016Birth}.

\begin{figure}
	\begin{center}

\includegraphics[width=16cm]{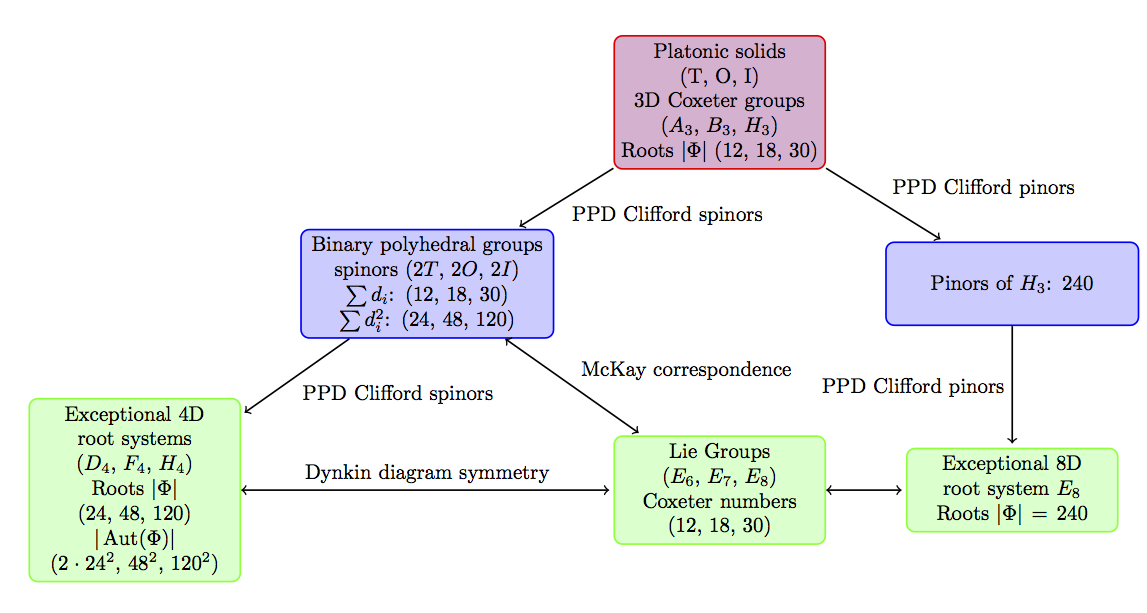}

\end{center}
\caption[$E_6^+$]{Web of connections putting the original trinities and McKay correspondence into a much wider context. The connection between the sum of the dimensions of the irreducible representations $d_i$ of the binary  polyhedral groups and the Coxeter number of the Lie algebras actually goes all the way back to the number of roots in the 3D root systems $(12, 18, 30)$. These then induce the binary polyhedral groups (linked via the  McKay correspondence to the $E$-type affine Lie algebras) and the 4D root systems via the Clifford spinor construction. The new pinor construction links $H_3$ directly with $E_8$ explaining the latter entirely within 3D geometry.}
\label{figMcKay}
\end{figure}

\section{The birth of $E_8$ out of the spinors of the icosahedron}\label{birth}

Previously, we have constructed the $120$ elements of $2I$, which can be reinterpreted as the $120$ roots of $H_4$
$$(\pm 1, 0, 0, 0) \text{ (8 permutations) }$$
$$\frac{1}{2}(\pm 1, \pm 1, \pm 1, \pm 1) \text{ (16 permutations) }$$
$$\frac{1}{2}(0, \pm 1, \pm \sigma, \pm \tau) \text{ (96 even permutations)}.$$
A set of simple roots for these is given e.g. by $a_1=\frac{1}{2}(-\sigma, -\tau, 0, -1)$, $a_2=\frac{1}{2}(0, -\sigma, -\tau,  1)$, $a_3=\frac{1}{2}(0,1, -\sigma, -\tau)$ and $a_4=\frac{1}{2}(0, -1, -\sigma, \tau)$.

As we have discussed above the $H_3$ root system contains  the inversion $\pm e_1e_2e_3=\pm I$. It simply Hodge dualises  the $120$ spinors doubly covering the even  subgroup $A_5$ of $60$ rotations to create a second copy  consisting of vector and pseudoscalar parts. This gives the $120$ additional group elements necessary to provide the $240$  pinor double covering of the group $H_3=A_5\times \mathbb{Z}_2$ of order $120$. These therefore consist of a copy of the $120$ spinors (essentially $H_4$) and another copy multiplied by $I$. These are now valued in the full  8D Clifford algebra of 3D space consisting of scalar, vector, bivector and trivector parts.
For this set of $240$ pinors in the 8D Clifford algebra of 3D we now define a `reduced inner product': we keep the spinor copy of $H_4$ and multiply the copy $IH_4$ by $\tau I$. We then take inner products taking into account the recursion relation $\tau^2=\tau+1$ but then  rather  than taking the usual inner product consisting of both the integer with the $\mathbb{Z}[\tau]$-integer part $(\cdot, \cdot) = a+\tau b$ we extract only the integer part by defining the reduced inner product \cite{Wilson1986E8, MoodyPatera:1993b} $$(\cdot, \cdot)_\tau=(a+\tau b)_\tau:=a.$$ 
This set of $240$  includes the above choice of simple roots of $H_4$ 
$$\alpha_1:=a_1=\frac{1}{2}(-\sigma, -\tau, 0, -1),$$ 
$$\alpha_2:=a_2=\frac{1}{2}(0, -\sigma, -\tau,  1),$$ 
$$\alpha_3:=a_3=\frac{1}{2}(0,1, -\sigma, -\tau) \text { and }$$  
$$\alpha_8:=a_4=\frac{1}{2}(0, -1, -\sigma, \tau)$$
 along with their $\tau$-multiples 
$$\alpha_7:=\tau a_1=\frac{1}{2}(1, -\tau-1, 0, -\tau),$$ 
$$\alpha_6:=\tau a_2=\frac{1}{2}(0, 1, -\tau-1,  \tau),$$
$$\alpha_5:=\tau a_3=\frac{1}{2}(0,\tau, 1, -\tau-1) \text{ and }$$ 
$$\alpha_4:=\tau a_4=\frac{1}{2}(0, -\tau, 1, \tau+1).$$ 
This choice of inner product combines the two sets of $H_4$  into the $E_8$ diagram by changing the links. 
The link labelled by $5$ in the $H_4$ diagram
$$(a_3,a_4)=\frac{1}{4}(-1+\sigma^2-\tau^2)=\frac{1}{4}(-1+\sigma+1-\tau-1)=\frac{1}{4}(-1+1-\tau-\tau)=-\frac{\tau}{2},$$ means that these roots are instead orthogonal with respect to the reduced inner product
$$(\alpha_3,\alpha_8)_\tau=(a_3,a_4)_\tau=\left(-\frac{\tau}{2}\right)_\tau=0.$$
 Likewise,
$$(\alpha_5,\alpha_4)_\tau=(\tau \alpha_3,\tau \alpha_4)_\tau=\left(-\frac{\tau^3}{2}\right)_\tau=\left(-\frac{2\tau+1}{2}\right)_\tau=-\frac{1}{2},$$
means that the $5$-labelled link from the other $H_4$ diagram gets turned into a simple link. 
There are additional (simple) links introduced from
$$(\alpha_5,\alpha_8)_\tau=(\tau \alpha_3,\alpha_8)_\tau=(\alpha_3,\tau \alpha_8)_\tau=(\alpha_3,\alpha_4)_\tau=-\left(\frac{\tau^2}{2}\right)_\tau=-\left(\frac{\tau+1}{2}\right)_\tau=-\frac{1}{2},$$
whilst the simply-connected nodes of the original $H_4$ diagrams are not affected 
$$(\alpha_1,\alpha_2)_\tau=\left(-\frac{1}{2}\right)_\tau=-\frac{1}{2}.$$

This set of simple roots yields the following Cartan matrix 
$$ \begin{pmatrix}
   2&-1&0&0&0&0&0&0  
\\ -1&2&-1&0&0&0&0  &0
\\ 0&-1&2&-1&0&0&0&0
\\ 0&0&-1&2&-1&0&0&0  
\\ 0&0&0&-1&2&-1 &0&-1 
\\ 0&0&0&0&-1&2 &-1 &0
\\ 0&0&0&0&0&-1&2 &0
\\ 0&0&0&0&-1&0&0 &2\end{pmatrix},$$
which is just the $E_8$ Cartan matrix \cite{Dechant2016Birth}. The $240$ icosahedral pinors therefore  give the $240$ roots of $E_8$ under the reduced inner product and it is straightforward (if tedious) to check closure under reflections with respect to the reduced inner product. 

Surprisingly, the $E_8$ root system has therefore been hidden in plain sight within the  geometry of the Platonic icosahedron for three millennia, without anyone noticing. As with the 4D induction construction, this discovery seems only possible in Clifford algebra -- there is much prejudice against the usefulness of Clifford algebras since they have matrix representations and usually matrix methods are equivalent if less insightful -- but the 4D and 8D induction constructions are to my knowledge the only results that {required} Clifford algebra and were hitherto invisible to standard matrix methods.

\section{$H_4$ as a group of rotations rather than reflections II:  from $E_8$}\label{sec_spin}

\begin{figure}
	\begin{center}
\begin{tikzpicture}[
knoten/.style={        circle,      inner sep=.15cm,        draw}
]
%\node at (-1,.7) (knoten0) [knoten,  color=red] {};
\node at  (1,.7) (knoten1) [knoten,  color=white!0!black, ball color=white ] {};
\node at  (3,.7) (knoten2) [knoten,  color=white!0!black, ball color=black] {};
\node at  (5,.7) (knoten3) [knoten,  color=white!0!black, ball color=red ] {};
\node at  (7,.7) (knoten4) [knoten,  color=white!0!black, ball color=green] {};

\node at  (9,.7) (knoten5) [knoten,  color=white!0!black, ball color=red] {};
\node at (11,.7) (knoten6) [knoten,  color=white!0!black, ball color=black] {};
\node at (13,.7) (knoten7) [knoten,  color=white!0!black, ball color=white] {};
\node at (9,2.2) (knoten8) [knoten,  color=white!0!black, ball color=green] {};

%\node at  (-1,0) (alpha0) {{\textcolor{red}{$\alpha_0$}}};
\node at  (1,0)  (alpha1) {$\alpha_1$};
\node at  (3,0)  (alpha2) {$\alpha_2$};
\node at  (5,0)  (alpha3) {$\alpha_3$};
\node at  (7,0)  (alpha4) {$\alpha_4$};
\node at  (9,0)  (alpha7) {$\alpha_5$};
\node at (11,0)  (alpha6) {$\alpha_6$};
\node at (13,0)  (alpha5) {$\alpha_7$};
\node at (9,2.8) (alpha8) {$\alpha_8$};

%\path  (knoten0) edge (knoten1);
\path  (knoten1) edge (knoten2);
\path  (knoten2) edge (knoten3);
\path  (knoten3) edge (knoten4);
\path  (knoten4) edge (knoten5);
\path  (knoten5) edge (knoten6);
\path  (knoten6) edge (knoten7);
\path  (knoten5) edge (knoten8);

\end{tikzpicture}

	\begin{tikzpicture}[
	    knoten/.style={        circle,      inner sep=.15cm,        draw}  
	   ]

	  \node at (1,1.5) (knoten1) [knoten,  color=white!0!black, ball color=white ] {};
	  \node at (3,1.5) (knoten2) [knoten,  color=white!0!black, ball color=black ] {};
	  \node at (5,1.5) (knoten3) [knoten,  color=white!0!black, ball color=red ] {};
	  \node at (7,1.5) (knoten4) [knoten,  color=white!0!black, ball color=green ] {};

	\node at (1,2)  (a1) {$a_1$};
	\node at (3,2)  (a2) {$a_2$};
		\node at (5,2)  (a3) {$a_3$};
	\node at (6,1.75)  (tau) {$5$};
\node at (4,2.5)   (tau1) {};
	\node at (7,2)  (a4) {$a_4$};

	  \path  (knoten1) edge (knoten2);
	  \path  (knoten2) edge (knoten3);
	  \path  (knoten3) edge (knoten4);

	\end{tikzpicture}
	
  \caption[$E_8$]{Coxeter-Dynkin diagram folding and projection from $E_8$ to $H_4$: one defines the new generators ${s_{a_1}=s_{\alpha_1} s_{\alpha_7}},\,\,\,s_{a_2}=s_{\alpha_2} s_{\alpha_6},\,\,\,{s_{a_3}=s_{\alpha_3} s_{\alpha_5}},\,\,\,{s_{a_4}=s_{\alpha_4} s_{\alpha_8}}$, which themselves satisfy ${H_4}$ relations. }
\label{figE8}
\end{center}
\end{figure}
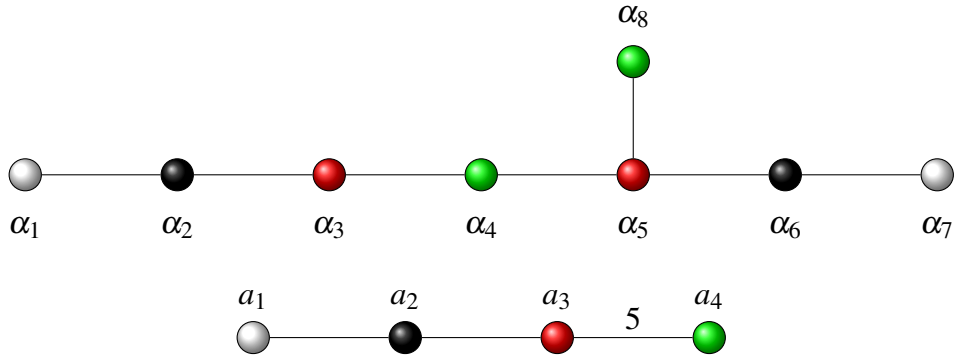

The usual view is the reverse of the process shown in the previous section, inducing $E_8$ from $H_3$, via two intermediate copies of $H_4$.
$E_8$ has an $H_4$ subgroup, as can be shown via Coxeter-Dynkin diagram foldings  \cite{Shcherbak:1988}:

We consider now the Clifford algebra in 8D with the usual Euclidean metric and take the simple roots $\alpha_1$ to  $\alpha_8$ of $E_8$ as shown in  Fig. \ref{figE8}. The simple reflections corresponding to the simple roots are thus just given via  
 $s_\alpha v=-\alpha v \alpha$. The Coxeter element $w$ is defined as the product of all these eight simple reflections, and in Clifford algebra it is therefore simply given by the corresponding Coxeter versor $W=\alpha_1\dots \alpha_8$ acting via sandwiching as $w\lambda=\tilde{W}\lambda W$. Its order, the Coxeter number $h$ (i.e. $W^h=\pm 1$), is $30$ for $E_8$ as mentioned previously. 

As illustrated in Figure \ref{figE8}, one can now define certain combinations of pairs of reflections (according to a four-fold colouring of the diagram, or corresponding to roots on top of each other in a Dynkin diagram folding), e.g. $s_{a_1}=s_{\alpha_1} s_{\alpha_7}$ etc. In a Clifford algebra setup these are just given by the products of root vectors ${a_1}={\alpha_1} {\alpha_7}$,   ${a_2}={\alpha_2} {\alpha_6}$, ${a_3}={\alpha_3} {\alpha_5}$ and   ${a_4}={\alpha_4} {\alpha_8}$ (this is essentially a partial folding of the usual alternating folding/two-colouring used in the construction of the Coxeter plane with symmetry group $I_2(h)$, see the next section). It is easy to show that the subgroup with the generators $s_{a_i}$  in fact satisfies the relations of the  $H_4$ Coxeter group \cite{Bourbaki1981Lie, Shcherbak:1988}: because of the Coxeter relations for $E_8$ and the orthogonality of the combined pair the combinations $s_a$ are easily seen to be involutions, and the 3-fold relations are similarly obvious from the  Coxeter relations; only for the 5-fold relation does one have to perform an explicit calculation in terms of the reflections with respect to the root vectors. This is thus particularly easy by multiplying together vectors in the Clifford algebra, rather than by concatenating two reflection formulas of the type shown in Definition \ref{DefRootSys} -- despite it only consisting of two terms, concatenation gets convoluted quickly, unlike multiplying together multivectors. 

Since the combinations $s_a$ are pairs of reflections, they are obviously rotations in the eight-dimensional space, so this $H_4$ group acts as rotations in the full space, but as a reflection group in a 4D subspace. The $H_4$ Coxeter element is given by multiplying together the four combinations $a_i$ -- its Coxeter versor is therefore trivially seen to be the same as that of $E_8$ (up to sign, since orthogonal vectors anticommute) and the Coxeter number of $H_4$ is thus the same as that of $E_8$, $30$. The projection of the $E_8$ root system onto the Coxeter plane consists of two copies of the projection of $H_4$ into the Coxeter plane, with a relative factor of $\tau$ (see the next section and in particular Figure \ref{figCoxPlE8}). 
This is also related to the fact that on the level of the root system there is a projection which maps the $240$ roots of $E_8$ onto the $120$ roots of $H_4$ and their $\tau$-multiples in one of the $H_4$-invariant 4-subspaces \cite{MoodyPatera:1993b, DechantTwarockBoehm2011E8A4} (c.f. previous section). This is essentially the exact reverse of finding $E_8$ from the two copies $H_4+\tau H_4$ in the last section. 
We therefore now consider the Coxeter plane itself. 

\section{The Coxeter plane}\label{Coxeterplane}

In this section, we consider the geometry of the Coxeter plane e.g. achieved by a complete folding/two-fold colouring of the $E_8$ Dynkin diagram (Figure \ref{figCoxPlPF}). 
For a given Coxeter element $w$ of any root system, there is a unique  plane called the Coxeter plane on which  $w$ acts as a rotation by $2\pi/h$.
Projection of a root system onto the Coxeter plane is thus a  way of visualising any finite Coxeter group, for instance the well-known representation of $E_8$ is such a projection of the $240$ vertices of the root system onto the Coxeter plane. 
In the standard theory there is an unnecessary complexification of the real geometry followed by taking real sections again just so that complex eigenvalues $\exp(2\pi i m/h)$  of $w$, for some integers $m$ called exponents, can be found \cite{Humphreys1990Coxeter}. 
Unsurprisingly, in Clifford algebra these complex structures arise naturally, and the complex `eigenvectors' are in fact eigenplanes where the Coxeter element acts as a rotation. We therefore systematically factorise Coxeter versors of root systems in the Clifford algebra, which gives the eigenplanes and exponents algebraically. We briefly discuss the 2D case of the two-dimensional family  of non-crystallographic Coxeter groups $I_2(n)$, followed by the three-dimensional groups $A_3$, $B_3$ and $H_3$ \cite{Dechant2012AGACSE}, before discussing the higher-dimensional examples.

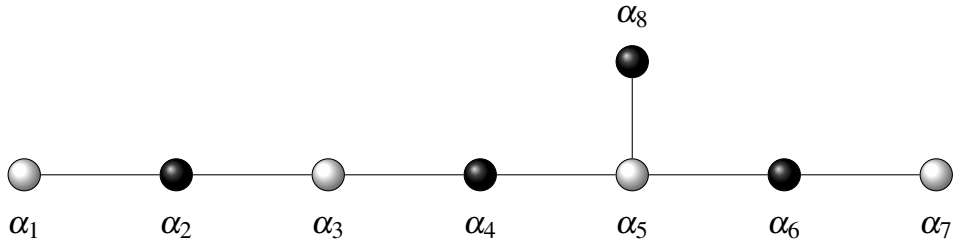
\begin{figure}
	\begin{center}
		\begin{tikzpicture}[
		knoten/.style={        circle,      inner sep=0.15cm,        draw}
		]
		%\node at (-1,.7) (knoten0) [knoten,  color=red] {};
		\node at  (1,.7) (knoten1) [knoten,  color=white!0!black, ball color=white ] {};
		\node at  (3,.7) (knoten2) [knoten,  color=white!0!black, ball color=black] {};
		\node at  (5,.7) (knoten3) [knoten,  color=white!0!black, ball color=white ] {};
		\node at  (7,.7) (knoten4) [knoten,  color=white!0!black, ball color=black] {};

		\node at  (9,.7) (knoten5) [knoten,  color=white!0!black, ball color=white] {};
		\node at (11,.7) (knoten6) [knoten,  color=white!0!black, ball color=black] {};
		\node at (13,.7) (knoten7) [knoten,  color=white!0!black, ball color=white] {};
		\node at (9,2.2) (knoten8) [knoten,  color=white!0!black, ball color=black] {};

		%\node at  (-1,0) (alpha0) {{\textcolor{red}{$\alpha_0$}}};
		\node at  (1,0)  (alpha1) {$\alpha_1$};
		\node at  (3,0)  (alpha2) {$\alpha_2$};
		\node at  (5,0)  (alpha3) {$\alpha_3$};
		\node at  (7,0)  (alpha4) {$\alpha_4$};
		\node at  (9,0)  (alpha7) {$\alpha_5$};
		\node at (11,0)  (alpha6) {$\alpha_6$};
		\node at (13,0)  (alpha5) {$\alpha_7$};
		\node at (9,2.8) (alpha8) {$\alpha_8$};

		%\path  (knoten0) edge (knoten1);
		\path  (knoten1) edge (knoten2);
		\path  (knoten2) edge (knoten3);
		\path  (knoten3) edge (knoten4);
		\path  (knoten4) edge (knoten5);
		\path  (knoten5) edge (knoten6);
		\path  (knoten6) edge (knoten7);
		\path  (knoten5) edge (knoten8);

		\end{tikzpicture}
	\end{center}

	\caption[dummy1]{Illustration of the geometry of the Coxeter plane via diagram foldings. Since any finite Coxeter group has a tree-like diagram, one can partition the simple roots into two  sets (black and white) of roots that are orthogonal within each set. Since the Cartan matrix is positive definite, it has a Perron-Frobenius eigenvector with all positive entries. This allows one to show the existence of the invariant Coxeter plane (by construction) as the bivector defined by the outer product of two vectors that are linear combinations of all the reciprocals of the white (respectively black) simple roots with the corresponding coefficients given by the entries in the Perron-Frobenius eigenvector.    }
	\label{figCoxPlPF}
	\end{figure}

The simple roots for $I_2(n)$  can  be taken as
 $\alpha_1=e_1$, $\alpha_2=-\cos{\frac{\pi}{n}}e_1+\sin{\frac{\pi}{n}}e_2$, which yields the Coxeter versor $W$ describing the $n$-fold rotation encoded by the $I_2(n)$ Coxeter element via 	$v\rightarrow wv=\tilde{W}vW$
as
\begin{equation}
W=\alpha_1\alpha_2=-\cos{\frac{\pi}{n}}+\sin{\frac{\pi}{n}}e_1e_2=-\exp{\left(-{\pi e_1e_2/n}\right)}.
\end{equation}
In Clifford algebra it is therefore immediately obvious that the action of the $I_2(n)$ Coxeter element is described by a versor that encodes rotations in the $e_1e_2$-Coxeter-plane and  yields $h=n$ since trivially $W^n=(-1)^{n+1}$. Since $I=e_1e_2$ is the bivector defining the plane of $e_1$ and $e_2$, it anticommutes with both $e_1$ and $e_2$ such that one can take $W$ through to the left (which reverses the bivector part) to arrive at the complex eigenvector equation 
$$
	v\rightarrow wv=\tilde{W}vW=\tilde{W}^2v=\exp{\left(\pm{2\pi I/n}\right)}v.
$$%
This yields the standard result for the complex eigenvalues,  however, in Clifford algebra it is now obvious that the complex structure is in fact given by the bivector describing the Coxeter plane (which is trivial for $I_2(n)$), and that the standard complexification is both unmotivated and unnecessary. 

More generally, if $v$ lies in a plane in which $W$ acts as a rotation, then 
$$	v\rightarrow wv=\tilde{W}vW=\tilde{W}^2v
$$%
still holds, whereas if $v$ is orthogonal such that the bivector describing the planes commutes with $v$, one just has
$$	v\rightarrow wv=\tilde{W}vW=\tilde{W}Wv=v.
$$%
Thus, if  $W$ factorises into orthogonal eigenspaces $W=W_1\dots W_k$  with $v$ lying in the plane defined by $W_1$, then all the orthogonal $W_i$s commute through and cancel out, whilst the one that defines the eigenplane that $v$ lies in described by the bivector $B_i$ gives the standard complex eigenvalue equation
$$	v\rightarrow wv=\tilde{W}vW=\tilde{W_1}\dots\tilde{W_k}vW_1\dots W_k=\tilde{W_1}^2\dots\tilde{W_k}W_kv=\tilde{W_1}^2v=\exp(2\pi B_im/h)v.
$$%
If $m$ is an exponent then so is $h-m$ since $w^{-1}$ will act as ${W_1}^2v=\exp(-2\pi B_im/h)v=\exp(2\pi B_i(h-m)/h)v$; in particular $1$ and $h-1$ are always exponents (from the Coxeter plane) -- in Clifford algebra it is easy to see that these are just righthanded and lefthanded rotations in the respective eigenplanes, with bivectors giving the complex structures. If $W$ has pure vector factors then these act as reflections and trivially yield the exponents $h/2$. 
%Thus, the argument works for the factors individually. The question is, could we have higher eigenblades? Does GA offer a better way of finding orthogonal eigenspaces? Or at least the Coxeter plane bivector? Can we do the whole uniform construction, just in Clifford algebra?

The pin group/eigenblade description in Geometric Algebra therefore yields a wealth of novel geometric insight, and we now consider higher-dimensional examples. For 3D and 4D groups, the geometry is completely governed by the above 2D geometry in the Coxeter plane, since the remaining normal vector (3D) or bivector (4D) are trivially fixed.   For $H_3$ one has $h=10$ and  complex eigenvalues $\exp(2\pi mi/h)$ with the exponents $m=\lbrace 1, 5, 9\rbrace$. For simple roots $\alpha_1=e_2$, $-2\alpha_2=(\tau-1)e_1+e_2+\tau e_3$ and $\alpha_3=e_3$,  the Coxeter plane bivector is $B_C=e_1e_2+\tau e_3e_1$ and the Coxeter  versor $2W=-\tau e_2-e_3+(\tau-1)I$ (here $I=e_1e_2e_3$) with eigenvalues $\exp{\left(\pm{2\pi B_C/h}\right)}$, which corresponds to $m=1$ and $m=9$. 
The vector $b_C=B_C I=-\tau e_2-e_3$ orthogonal to the Coxeter plane can only get reversed (since the Coxeter element in 3D is an odd operation), so one has $-\tilde{W}b_CW=-b_C=\exp{\left(\pm{5\cdot 2\pi B_C/h}\right)}b_C$ which gives $m=5$. $A_3$ and $B_3$ are very similar, they have Coxeter numbers $h=4$ and $h=6$, respectively, and thus exponents $m=\lbrace 1, 2, 3\rbrace$ and $m=\lbrace 1, 3, 5\rbrace$. The exponents $1$ and $h-1$ correspond to a rotation in the Coxeter plane in which the Coxeter element acts by $h$-fold rotation, whilst the normal to the Coxeter plane gets simply reflected, corresponding to the cases $h/2$ ($m=2$ and $m=3$ for $A_3$ and $B_3$, respectively).

\begin{figure}
	\begin{center}
	 		      \begin{tabular}{@{}c@{ }c@{ }c@{ }}
						\begin{tikzpicture}%[show background grid]
						\node (img) [inner sep=0pt,above right]
					{\includegraphics[width=3.5cm]{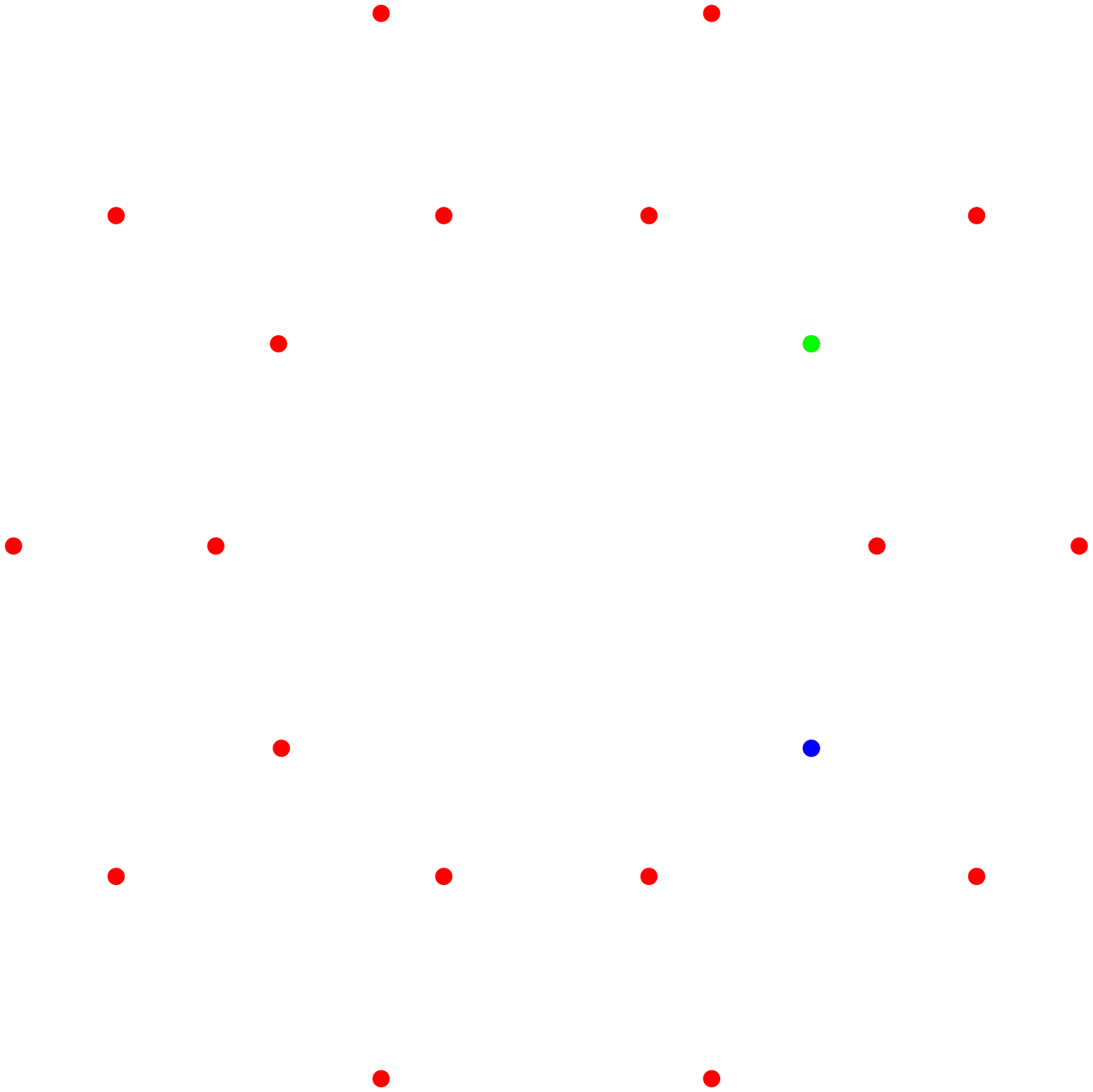}};
						\end{tikzpicture}&\hspace{1.5cm}
						\begin{tikzpicture}%[show background grid]
							\node (img) [inner sep=0pt,above right]
						{\includegraphics[width=3.5cm]{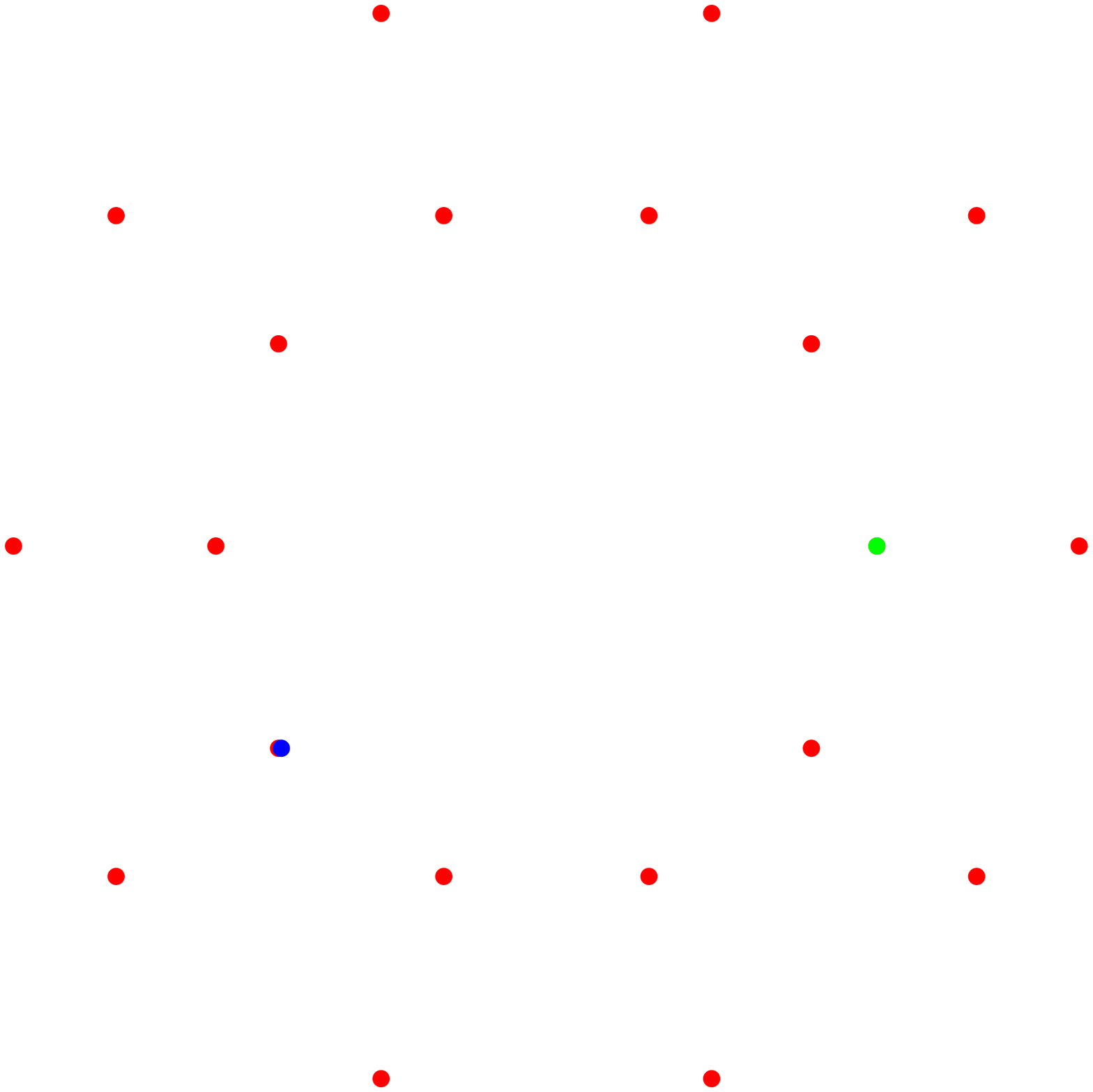}};
							\end{tikzpicture}&\hspace{1.5cm}	\begin{tikzpicture}%[show background grid]
							\node (img) [inner sep=0pt,above right]		 				
							{\includegraphics[width=3.5cm]{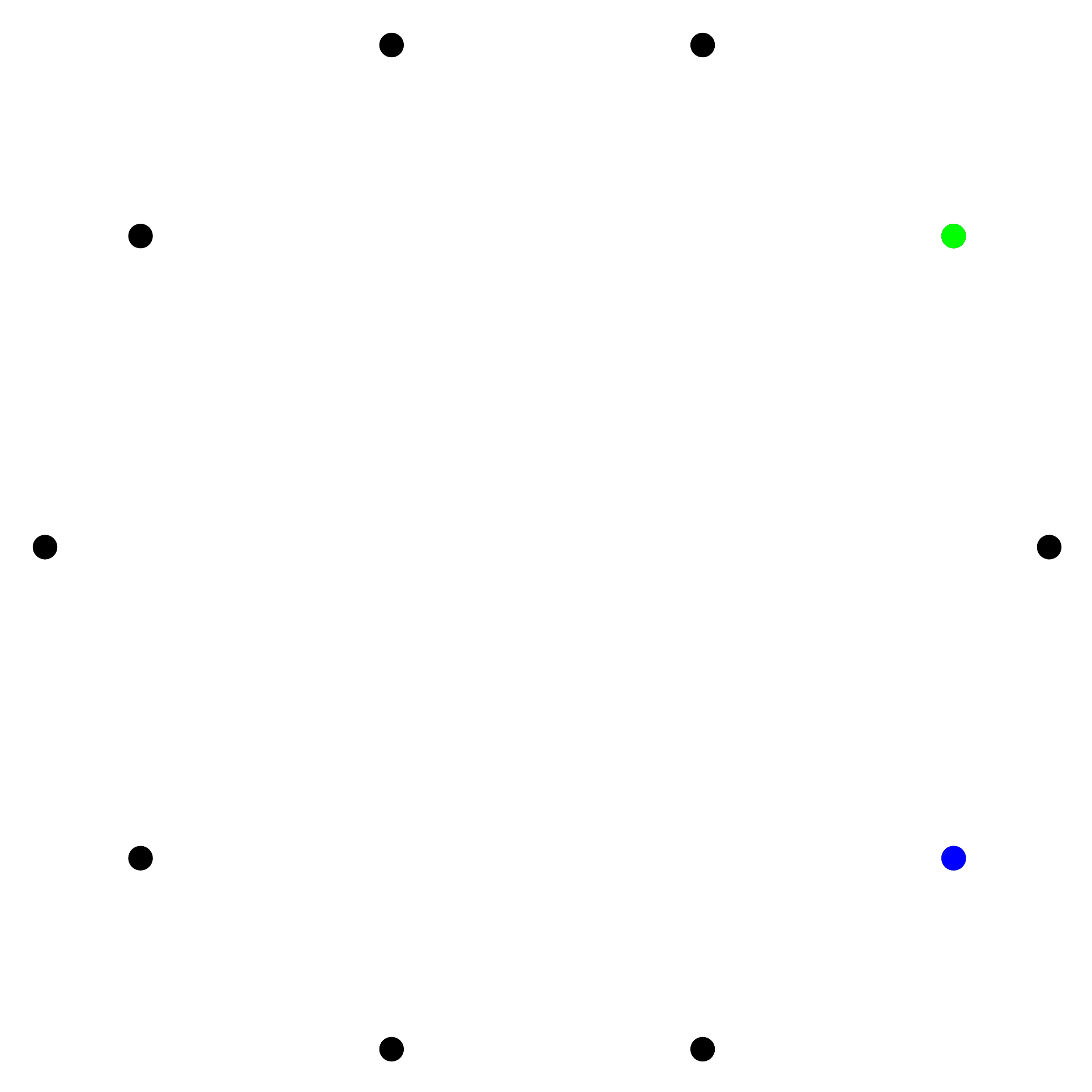}};
										\end{tikzpicture}
				\vspace{0.25cm}
			\\
					(a)&(b)&	(c) \\

						\begin{tikzpicture}%[show background grid]
						\node (img) [inner sep=0pt,above right]		 				
						{\includegraphics[width=3.5cm]{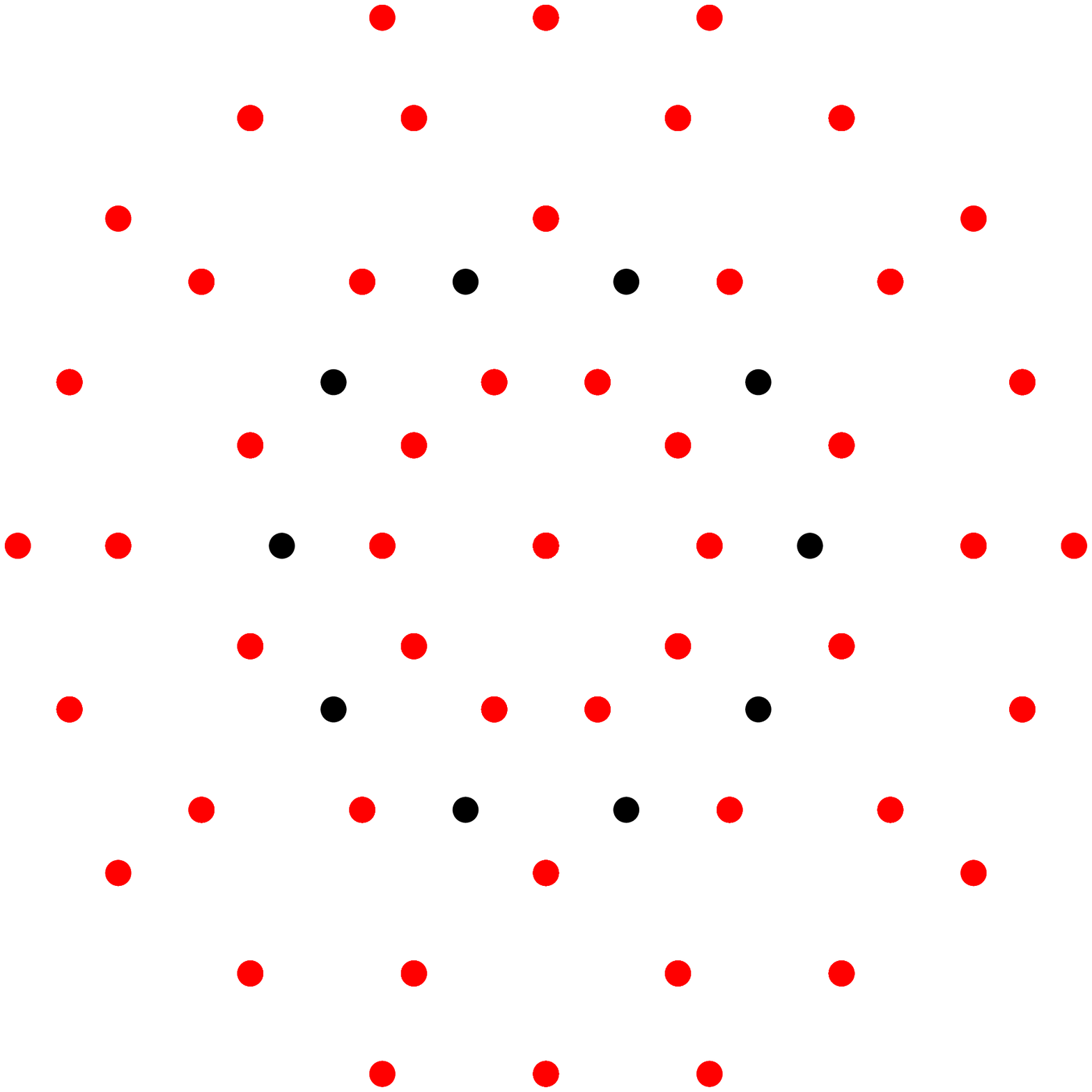}};
							\end{tikzpicture}&\hspace{1.5cm}
								\begin{tikzpicture}%[show background grid]
							\node (img) [inner sep=0pt,above right]
									{\includegraphics[width=3.5cm]{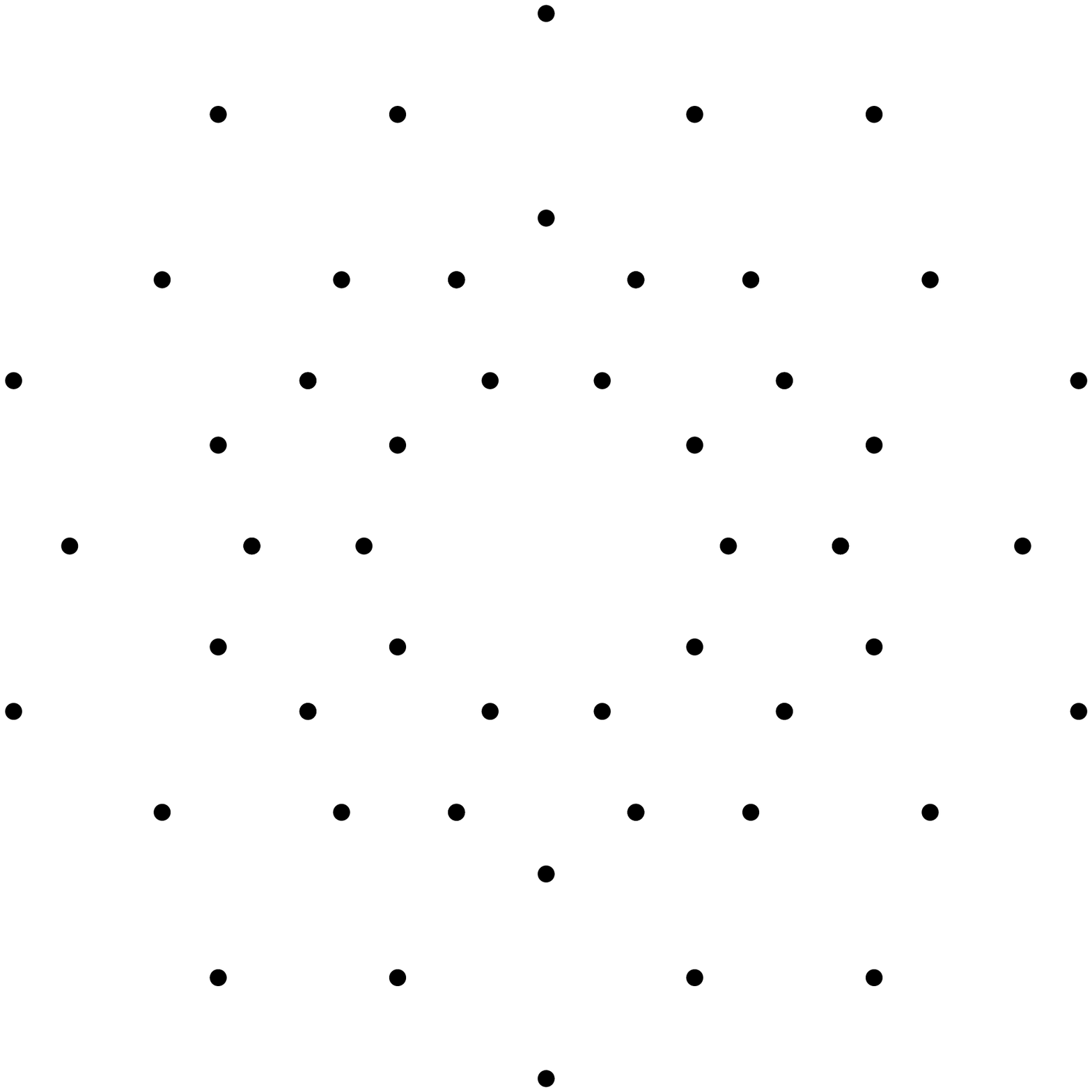}};
										\end{tikzpicture}&
								\vspace{0.25cm}
							\\
									(d)&(e) &\\
		\end{tabular}	
	
\end{center}
\caption[dummy1]{Coxeter projections of $A_4$ (a) and b)) and $H_2$ (panel c)), $H_2^{aff}$ (d)) and $D_6$ Coxeter projection (e)). The action of the Coxeter element as an $h$-fold rotation is visualised by the two coloured dots (green and blue) that are rotated into another by this rotation.  }
\label{figCoxPlA4}
\end{figure}

%\subsection{$A_4$ (and $H_2$) (and $D_6$)}
We now consider the four-dimensional cases  $A_4$, $B_4$, $D_4$, $F_4$ and $H_4$.
We explain the case of $A_4$ in detail, which is known to have exponents $\{1,2,3,4\}$.  
We take as the simple roots $\alpha_1=\frac{1}{\sqrt{2}}(e_2-e_1)$, $\alpha_2=\frac{1}{\sqrt{2}}(e_3-e_2)$, $\alpha_3=\frac{1}{\sqrt{2}}(e_4-e_3)$ and  $\alpha_4=\frac{1}{{2}}(\tau e_1+\tau e_2 +\tau e_3 +(\tau-2)e_4)$. It is easy to calculate that this choice of simple roots yields the correct $A_4$ Cartan matrix  and that reflections in them yield a root polytope of $20$ vertices. This matrix has Perron-Frobenius eigenvector $(1, \tau, \tau, 1)^T$ and  one can construct the Coxeter plane bivector as $B_C\propto -e_1e_3-e_1e_4+e_2e_3+e_2e_4-1/2(\tau-1)e_3e_4$ via the two vectors $e_3+e_4$ and $-e_1+e_2+e_3+(2\tau+1)e_4$ arrived at from the Perron-Frobenius eigenvector and the reciprocal frame of the simple roots as illustrated in Figure \ref{figCoxPlPF}. The Coxeter versor $W=\alpha_3\alpha_1\alpha_2\alpha_4$ is calculated to be $4W= 1-e_2e_3+e_1e_4+(\tau-1)(e_3e_4+e_2e_4-e_1e_3)-(\tau+1)e_1e_2-(2\tau-1)e_1e_2e_3e_4$. It is easy to show that $\tilde{W}B_CW=B_C$ and the Coxeter element therefore indeed stabilises the Coxeter plane. However, we are claiming that the Coxeter element can actually be written as bivector exponentials in the planes defined by $B_C$ and  $IB_C$, with angles given by the exponents  $\{1,2,3,4\}$. These are given as left- and righthanded rotations in the two planes as shown in Figure \ref{figCoxPlA4} a) and b). The Coxeter projection of the $20$ vertices forms two concentric decagonal circles -- in the Coxeter plane $w$ acts as a rotation by $2\pi/5$ (as denoted by the two coloured vertices in a) with the Coxeter element taking one into the other), whilst in the plane $IB_C$ it acts as a rotation by $4\pi/5$, as shown in b). 
It is easy to check that $W$ can indeed be written as $W=\exp(\frac{\pi}{5}B_C)\exp(-\frac{2\pi}{5}IB_C)$. It is clear that taking the product of simple roots in the Coxeter element in a different order introduces overall minus signs as well as minus signs in the exponentials, so we will not worry about these from now on. 

$A_4$ is unusual in that the projection from 4D only yields two concentric circles in the Coxeter plane. In fact, it consists of two copies of $H_2$ (panel c)) with a relative factor of $\tau$. This is similar to the situation for $E_8$ and $H_4$, as explained in Fig. \ref{figE8}, since by removing four of the eight nodes one gets a diagram folding from $A_4$ to $H_2$. 
In \cite{DechantTwarockBoehm2011H3aff}, we were considering affine extensions of $H_2$ by adding a translation operator and taking the orbit under the compact group (panel d)). 
What is striking is that this $H_2^{aff}$ point set, i.e. an affine extension of the decagon after one unit translation, is very nearly the Coxeter projection of $D_6$, which is shown in panel e).

The situation for the other 4D groups is similar, as shown in Figure \ref{figCoxPlB4}, which shows for the groups $B_4$, $D_4$, $F_4$ and $H_4$ that the Coxeter element acts in the Coxeter plane $B_C$ as rotations by $\pm 2\pi/h$, and in the plane defined by $IB_C$ as $h$-fold rotations giving the other exponents algebraically. 
For $B_4$ one has exponents $\{1, 3, 5, 7\}$ as shown in panels a) and b) and the decomposition into eigenblades of the Coxeter element $W=\exp(\frac{\pi}{8}B_C)\exp(\frac{3\pi}{8}IB_C)$ reflects this.

\begin{figure}
	\begin{center}
	 		      \begin{tabular}{@{}c@{ }c@{ }}
						\begin{tikzpicture}%[show background grid]
						\node (img) [inner sep=0pt,above right]
					{\includegraphics[width=5cm]{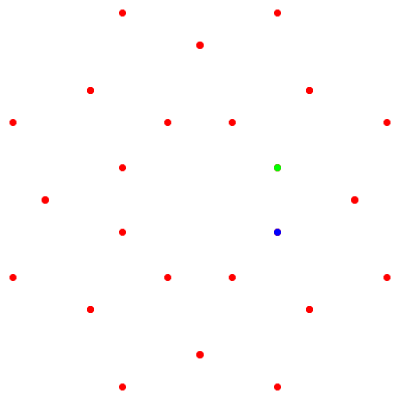}};
						\end{tikzpicture}&\hspace{1.5cm}
						\begin{tikzpicture}%[show background grid]
						\node (img) [inner sep=0pt,above right]
					{\includegraphics[width=5cm]{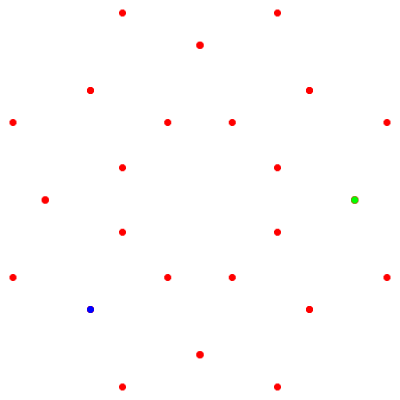}};
						\end{tikzpicture}
				\vspace{0.25cm}
			\\
					(a)&(b) \\
								\begin{tikzpicture}%[show background grid]
								\node (img) [inner sep=0pt,above right]
							{\includegraphics[width=5cm]{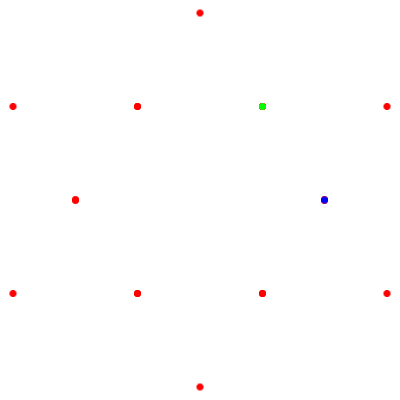}};
								\end{tikzpicture}&\hspace{1.5cm}
								\begin{tikzpicture}%[show background grid]
								\node (img) [inner sep=0pt,above right]
							{\includegraphics[width=5cm]{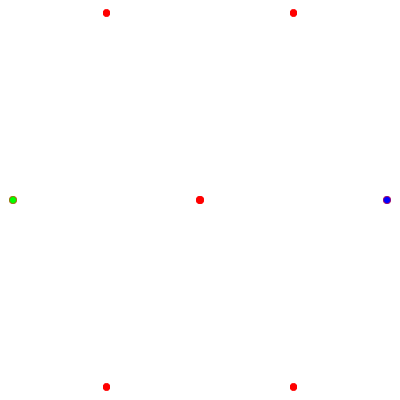}};
								\end{tikzpicture}
						\vspace{0.25cm}
					\\
							(c)&(d) \\
									\begin{tikzpicture}%[show background grid]
									\node (img) [inner sep=0pt,above right]
								{\includegraphics[width=5cm]{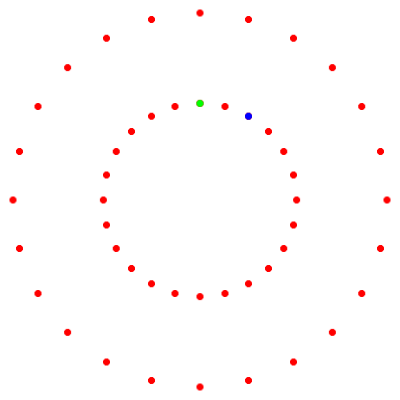}};
									\end{tikzpicture}&\hspace{1.5cm}
									\begin{tikzpicture}%[show background grid]
									\node (img) [inner sep=0pt,above right]
								{\includegraphics[width=5cm]{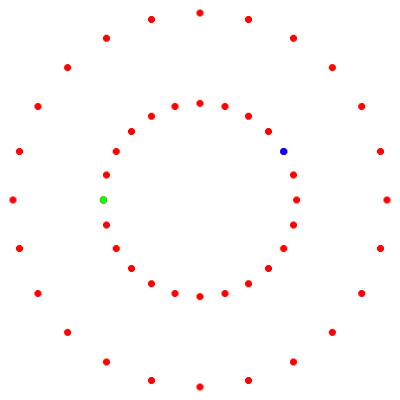}};
									\end{tikzpicture}
							\vspace{0.25cm}
						\\
								(e)&(f) \\
										\begin{tikzpicture}%[show background grid]
										\node (img) [inner sep=0pt,above right]
									{\includegraphics[width=5cm]{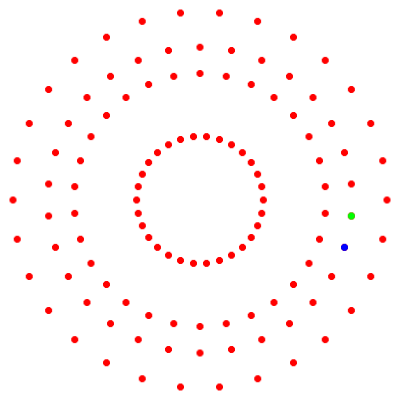}};
										\end{tikzpicture}&\hspace{1.5cm}
										\begin{tikzpicture}%[show background grid]
										\node (img) [inner sep=0pt,above right]
									{\includegraphics[width=5cm]{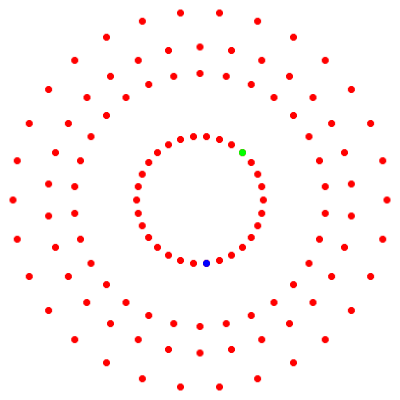}};
										\end{tikzpicture}
								\vspace{0.25cm}
							\\
									(g)&(h) \\
								
		\end{tabular}	
	
\end{center}
\caption[dummy1]{Coxeter projections of $B_4$, $D_4$, $F_4$ and $H_4$. In each row the plots are the action of the Coxeter element in the Coxeter plane given by $B_C$ and in the plane given by $IB_C$.}
\label{figCoxPlB4}
\end{figure}

{$D_4$} has {exponents $\{1, 3, 3, 5\}$} which is reflected in the fact that the Coxeter versor can be written as 
$W=\exp(\frac{-\pi}{6}B_C)\exp(\frac{\pi}{2}IB_C)=-\exp(\frac{-\pi}{6}B_C)IB_C$. The Coxeter projections of $D_4$ into the $IB_C$ plane demonstrate that the other factor in $W$ that does not come from $B_C$ is actually the product of two vectors ($e_1+e_2-2e_3$ and $e_1-e_2$ for simple roots $e_1$, $e_2$, $e_3$ and $\frac{1}{2}(e_4-e_1-e_2-e_3)$)
 rather than a bivector exponential (since the angle is $\pi/2$): on some vectors it acts as a rotation by $3=h/2$ in the plane, others it projects onto the origin (Fig. \ref{figCoxPlB4} panels c) and d)). 
{$F_4$} has {exponents $\{1, 5, 7, 11\}$} which again is evident from the Clifford factorisation $W=\exp(\frac{\pi}{12}B_C)\exp(\frac{5\pi}{12}IB_C)$ and the way it acts on the two planes (panels e) and f)). 
{$H_4$} has {exponents $\{1, 11, 19, 29\}$} and factorisation $W=\exp(\frac{\pi}{30}B_C)\exp(\frac{11\pi}{30}IB_C)$ (panels g) and h)). Table \ref{tab:1} summarises the factorisations of the 4D Coxeter versors. 

\begin{table}

%
% Follow this input for your own table layout
%
\begin{tabular}{|c|c|c|c|}

	\hline
	rank 4 root system&$h$&exponents&W-factorisation
	\tabularnewline 	\hline 	\hline
	$A_4$&$5$&$1,2,3,4$&$W=\exp\left(\frac{\pi}{\textcolor{red}{5}}B_C\right)\exp\left(\frac{\textcolor{red}{2}\pi}{5}IB_C\right)$
	\tabularnewline	\hline
	$B_4$&$8$&$1,3,5,7$&$W=\exp\left(\frac{\pi}{\textcolor{red}{8}}B_C\right)\exp\left(\frac{\textcolor{red}{3}\pi}{8}IB_C\right)$
	\tabularnewline
		\hline
	$D_4$&$6$&$1,3,3,5$&$W=\exp\left(\frac{\pi}{\textcolor{red}{6}}B_C\right)\exp\left(\frac{\pi}{\textcolor{red}{2}}IB_C\right)$
		\tabularnewline
			\hline		$F_4$&$12$&$1,5,7,11$&$W=\exp\left(\frac{\pi}{\textcolor{red}{12}}B_C\right)\exp\left(\frac{\textcolor{red}{5}\pi}{12}IB_C\right)$
			\tabularnewline
				\hline
	$H_4$&$30$&$1,11,19,29$&$W=\exp\left(\frac{\pi}{\textcolor{red}{30}}B_C\right)\exp\left(\frac{\textcolor{red}{11}\pi}{30}IB_C\right)$
				\tabularnewline
	\hline
	\end{tabular}
	\caption{Summary of the factorisations of the Coxeter versors of the 4D root systems}
	\label{tab:1}       % Give a unique label
\end{table}

The Coxeter versor of $D_6$ can be written as $W=\exp(\frac{\pi}{10}B_C)\exp(\frac{3\pi}{10}B_2)B_3$ for certain bivectors $B_2$ and $B_3$ from which it is evident that the 
exponents are indeed $\{1, 3, 5, 5, 7, 9\}$, including two reflections and otherwise 10-fold rotations in the Coxeter plane and another orthogonal plane (Figure Fig. \ref{figCoxPlD6} a) and c)). Since  as in Fig. \ref{figE8} by removing a pair of nodes  and as for $A_4\&H_2$ there is also a diagram folding from $D_6$ to $H_3$ ($H_3$ as we saw above has exponents $\{1, 5, 9\}$), the $D_6$ projection  again actually consists of two copies of that of $H_3$ with a relative factor of $\tau$ (panel b)) -- but the $H_3$ projection already has radii with a relative factor of $\tau$ such that two orbits of the $D_6$ projection  fall on top of each other.

\begin{figure}
	\begin{center}
	 		      \begin{tabular}{@{}c@{ }c@{ }c@{ }}
						\begin{tikzpicture}%[show background grid]
						\node (img) [inner sep=0pt,above right]
					{\includegraphics[width=5cm]{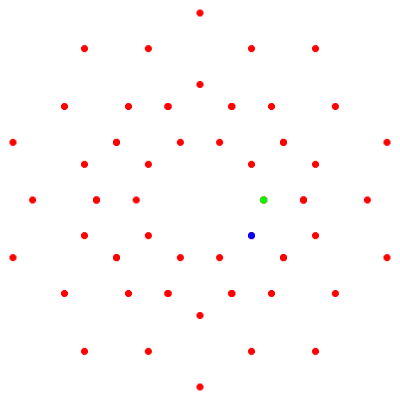}};
						\end{tikzpicture}&\hspace{0.5cm}
						\begin{tikzpicture}%[show background grid]
						\node (img) [inner sep=0pt,above right]
							{\includegraphics[width=5cm]{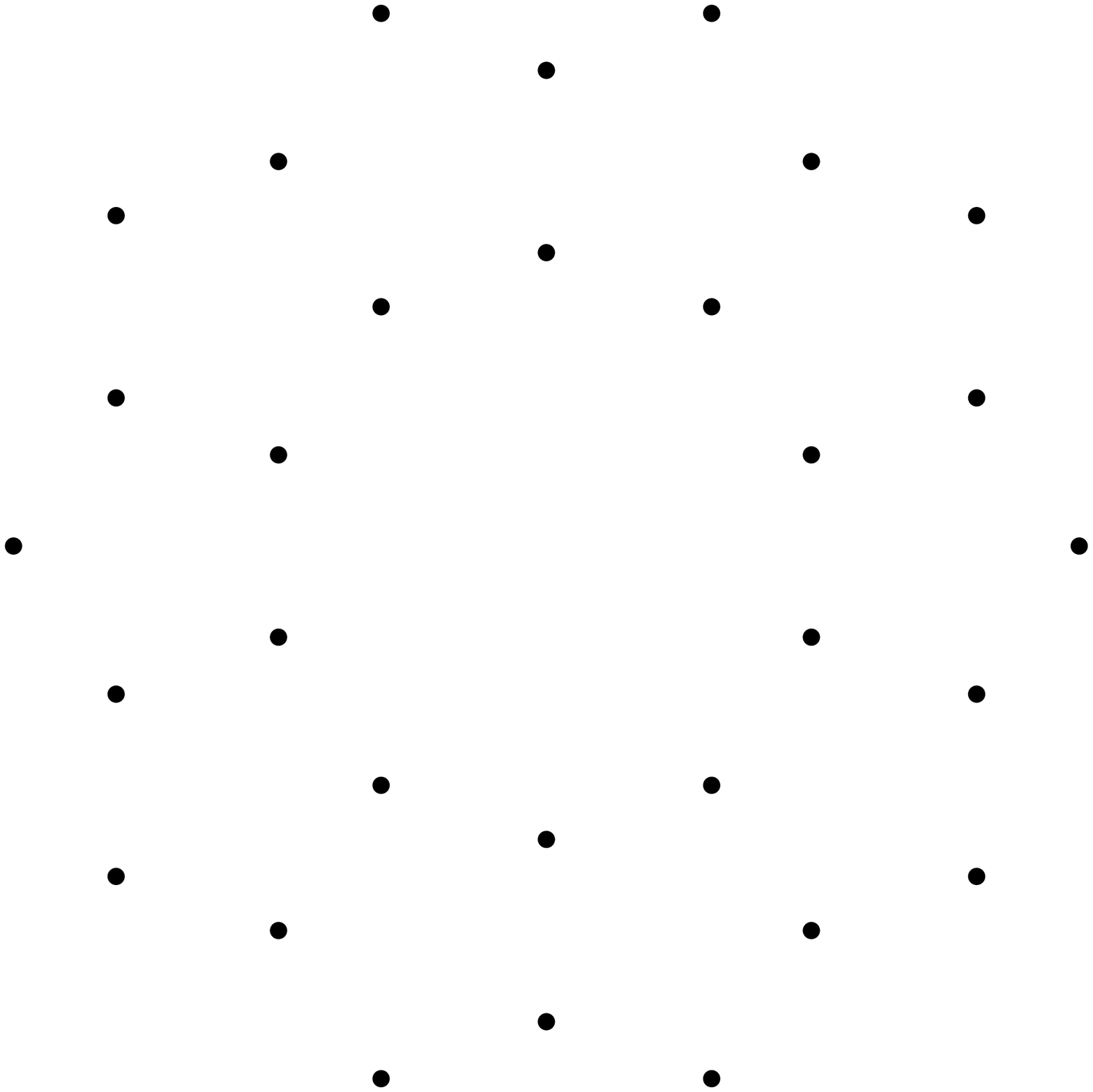}};
								\end{tikzpicture}&\hspace{0.5cm}
								\begin{tikzpicture}%[show background grid]
								\node (img) [inner sep=0pt,above right]
					{\includegraphics[width=5cm]{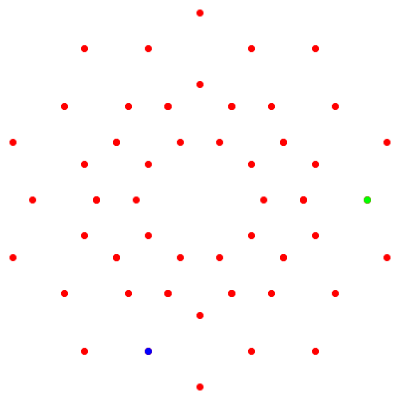}};
						\end{tikzpicture}
				\vspace{0.25cm}
			\\
					(a)&(b)&(c) \\

		\end{tabular}	
	
\end{center}
\caption[dummy1]{Coxeter projections of $D_6$ (panels a) and c)) and $H_3$ (panel b)). The two innermost radii of the $H_3$ projection are precisely in the ratio of $\tau$, such that since the $D_6$ projection consists of two copies of the $H_3$ with a relative radius of $\tau$, two orbits actually coincide. In the Coxeter plane a) and the other eigenplane c) the Coxeter element acts by $10$-fold rotation, as expected. }
\label{figCoxPlD6}
\end{figure}

\begin{figure}
	\begin{center}
	 		      \begin{tabular}{@{}c@{ }c@{ }c@{ }}
						\begin{tikzpicture}%[show background grid]
						\node (img) [inner sep=0pt,above right]
					{\includegraphics[width=5cm]{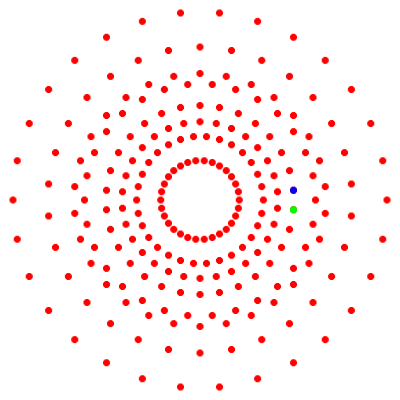}};
						\end{tikzpicture}&\hspace{0.5cm}
						\begin{tikzpicture}%[show background grid]
						\node (img) [inner sep=0pt,above right]
						{\includegraphics[width=5cm]{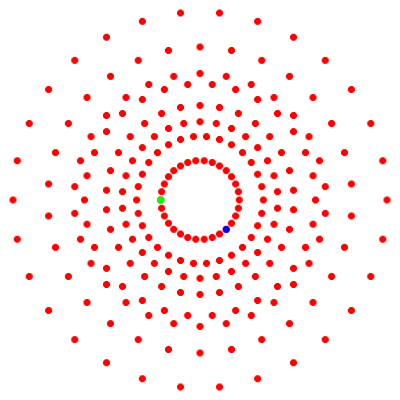}};
							\end{tikzpicture}&\hspace{0.5cm}
							\begin{tikzpicture}%[show background grid]
							\node (img) [inner sep=0pt,above right]
					{\includegraphics[width=5cm]{H4CoxPl.png}};
						\end{tikzpicture}
				\vspace{0.25cm}
			\\
					(a)&(b)&(e)  \\
							\begin{tikzpicture}%[show background grid]
							\node (img) [inner sep=0pt,above right]
						{\includegraphics[width=5cm]{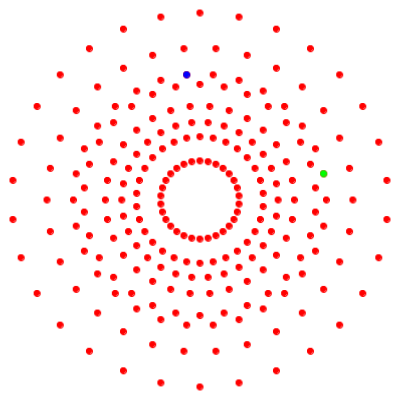}};
							\end{tikzpicture}&\hspace{0.5cm}
							\begin{tikzpicture}%[show background grid]
							\node (img) [inner sep=0pt,above right]
							{\includegraphics[width=5cm]{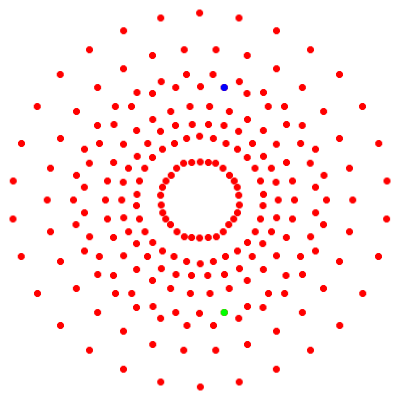}};
								\end{tikzpicture}&\hspace{0.5cm}		
					\vspace{0.25cm}
							\\
									(c)&(d)&  \\
		\end{tabular}	
	
\end{center}
\caption[dummy1]{Coxeter projections of $E_8$ and $H_4$ (c)). Again, $E_8$ consists of $H_4$ and $\tau H_4$ with the Coxeter element acting as a $30$-fold rotation in both, as usual in the Coxeter plane a) and according to the exponents $11$, $7$ and $13$ in b), c) and d), respectively. }
\label{figCoxPlE8}
\end{figure}

Not surprisingly, analogously the Coxeter versor for $E_8$ can be written as $$-W=\exp(\frac{\pi}{30}B_C)\exp(\frac{11\pi}{30}B_2)\exp(\frac{7\pi}{30}B_3)\exp(\frac{13\pi}{30}B_4).$$ This gives the well-known exponents $\{1, 7, 11, 13, 17, 19, 23, 29\}$ a more geometric meaning as $30$-fold rotations in four orthogonal eigenplanes (Fig. \ref{figCoxPlE8}). As we have alluded to in Section \ref{sec_spin}, the Coxeter projection of $E_8$ actually consists of two copies of that of $H_4$ in the bottom row of Figure \ref{figCoxPlB4} with a relative radius of $\tau$. We recall that $H_4$ also has exponents $\{1, 11, 19, 29\}$ and since it is a subgroup of $E_8$ they have of course the same Coxeter element and number,  and share two eigenplanes with exponents $1$ and $29$ as well as $11$ and $19$. 

For this $E_8$ example we explicitly give the details. 
We choose the simple roots of $E_8$ (in the Clifford algebra of 8D, rather than as earlier the 8D Clifford algebra of 3D) as
$$\alpha_1=\frac{1}{\sqrt{2}}(e_7-e_6), \alpha_2=\frac{1}{\sqrt{2}}(e_6-e_5), \alpha_3=\frac{1}{\sqrt{2}}(e_5-e_4), \alpha_4=\frac{1}{\sqrt{2}}(e_4-e_3),$$
$$\alpha_5=\frac{1}{\sqrt{2}}(e_3-e_2), \alpha_6=\frac{1}{\sqrt{2}}(e_2-e_1), \alpha_8=\frac{1}{\sqrt{2}}(e_2+e_1),$$ $$\alpha_7=\frac{1}{\sqrt{8}}(e_1-e_2-e_3-e_4-e_5-e_6-e_7+e_8).$$

Then the Coxeter versor $W=\alpha_2\alpha_4\alpha_6\alpha_8\alpha_3\alpha_5\alpha_1\alpha_7$ is given by $$W=-\frac{1}{16}+\frac{1}{16}\,e_{{2,3,4,5,6,7}}+\frac{1}{16}\,e_{{1,2,3,4,5,6,7,8}}-\frac{1}{16}\,e_{{1,4,5,6,7,8}}-\frac{1}{16}\,e_{{1,2,3,6,7,8}}-\frac{1}{16}\,e_{{4,5,6,7}}-\frac{1}{16}\,e_{{2,3,6,7}}$$
$$+\frac{1}{16}\,e_{{1,6,7,8}}+\frac{1}{16}\,e_{{1,2,5,6,7,8}}+\frac{1}{16}\,e_{{6,7}}
+\frac{1}{16}\,e_{{2,5,6,7}}+\frac{1}{16}\,e_{{1,3,5,6,7,8}}-\frac{1}{16}\,e_{{1,2,4,6,
7,8}}+\frac{1}{16}\,e_{{3,5,6,7}}-\frac{1}{16}\,e_{{2,4,6,7}}$$
$$-\frac{1}{16}\,e_{{1,3,4,6,7,8}}-
\frac{1}{16}\,e_{{3,4,6,7}}+\frac{1}{16}\,e_{{1,3,4,8}}+\frac{1}{16}\,e_{{3,4}}+\frac{1}{16}\,e_{{1,2,
3,4,7,8}}-\frac{1}{16}\,e_{{1,4,7,8}}+\frac{1}{16}\,e_{{2,3,4,7}}+\frac{1}{16}\,e_{{1,2,3,5,7,
8}}$$
$$-\frac{1}{16}\,e_{{4,7}}+\frac{1}{16}\,e_{{2,3,5,7}}-\frac{1}{16}\,e_{{1,5,7,8}}-\frac{1}{16}\,e_{{
5,7}}-\frac{1}{8}\,e_{{1,5,6,7}}+\frac{1}{16}\,e_{{1,5,6,8}}+\frac{1}{16}\,e_{{1,2,7,8}}+\frac{1}{16}
\,e_{{5,6}}\\+\frac{1}{16}\,e_{{1,3,7,8}}$$
$$+\frac{1}{16}\,e_{{3,7}}+\frac{1}{16}\,e_{{2,4,5,7}}+\frac{1}{16}\,e_{{1,3,4,5
,7,8}}+\frac{1}{8}\,e_{{1,3,4,7}}\\-\frac{1}{16}\,e_{{1,3,4,5,6,8}}+\frac{1}{16}\,e_{{3,4,5,7}}+
\frac{3}{8}\,e_{{1,3,4,5,6,7}}-\frac{1}{16}\,e_{{3,4,5,6}}$$
$$+\frac{1}{4}\,e_{{1,2,4,5,6,7}}-\frac{1}{16}
\,e_{{1,2,4,5,6,8}}-\frac{1}{16}\,e_{{1,2,6,8}}\\-\frac{1}{16}\,e_{{2,4,5,6}}+\frac{1}{8}\,e_{{1
,2,6,7}}-\frac{1}{16}\,e_{{1,2,3,5,6,8}}+\frac{1}{16}\,e_{{2,7}}+\frac{1}{8}\,e_{{1,7}}-\frac{1}{16}\,e_{{1,8}}$$
$$-\frac{1}{16}\,e_{{2,6}}+\frac{1}{4}\,e_{{1,2,3,5,6,7}
}-\frac{1}{16}\,e_{{2,3,5,6}}+\frac{1}{8}\,e_{{1,2,3,4,6,7}}-\frac{1}{16}\,e_{{1,2,3,4,6,8}}-\frac{1}{16}\,e_{{2,3,4,6}}+\frac{1}{16}\,e_{{1,2,4,8}}+\frac{1}{16}\,e_{{2,4}}$$
$$-\frac{1}{16}\,e_{{1,2,5
,8}}+\frac{1}{8}\,e_{{1,2,5,7}}-\frac{1}{16}\,e_{{2,5}}+\frac{1}{16}\,e_{{1,2,3,8}}+\frac{1}{16}\,e_{{
2,3}}-\frac{1}{16}\,e_{{1,2,3,4,5,8}}+\frac{1}{8}\,e_{{1,2,3,4,5,7}}+\frac{1}{16}\,e_{{1,4,6,8
}}-\frac{1}{16}\,e_{{2,3,4,5}}$$
$$+\frac{1}{16}\,e
_{{1,2,4,5,7,8}}-\frac{1}{4}\,e_{{1,4,6,7}}-\frac{1}{16}\,e_{{1,3,6,8}}+\frac{1}{16}\,e_
{{1,4,5,8}}+\frac{1}{4}\,e_{{1,3,5,7}}-\frac{1}{16}\,e_{{1,3,5,8}}-\frac{1}{8}\,e_{{1,4,5,6}}-
\frac{1}{8}\,e_{{1,3,5,6}}-\frac{1}{4}\,e_{{1,3,4,6}}$$
$$+\frac{1}{8}\,e_{{1,4}}+\frac{1}{8}\,e_{{1,3}}-\frac{1}{4}\,e_{{1,3,4,5}}-\frac{1}{8}\,e_{{1,2,4,6}}-\frac{1}{8}\,e_{{1,2,3,6}}-\frac{1}{8}\,e_{{1,2,4,
5}}-\frac{1}{8}\,e_{{1,2,3,5}}+\frac{1}{16}\,e_{{4,6}}+\frac{1}{16}\,e_{{4,5}}-\frac{1}{16}\,e_{{3,6}}
-\frac{1}{16}\,e_{{3,5}}$$
$$=-\exp(\frac{\pi}{30}B_C)\exp(\frac{11\pi}{30}B_2)\exp(\frac{7\pi}{30}B_3)\exp(\frac{13\pi}{30}B_4),$$
where the eigenplane bivectors $B_i$ are orthogonal to one another and are explicitly given by the outer products of the vector pairs (given numerically, for simplicity)

$$B_C=v_1\wedge w_1, \,\, v_1=2.8129\,e_{{3}}+ 2.8129\,e_{{4}}+ 4.5514\,e_{{5}}+
 4.5514\,e_{{6}}+ 5.1395\,e_{{7}}+ 21.771\,e_{{8}},$$
$$w_1=- 0.23919\,e_{{1}}+ 1.6534\,e_{{2}}+ 1.6534\,e_{{3}}+
 3.9417\,e_{{4}}+ 5.1113\,e_{{6}}+ 5.1113\,e_{{7}}+
 3.9417\,e_{{5}}+ 21.652\,e_{{8},}$$

$$B_2=v_2\wedge w_2, \,\, v_2=1.1504\,e_{{3}}+ 1.1504\,e_{{4}}+ 1.8614\,e_{{5}}+
 1.8614\,e_{{6}}- 0.2405\,e_{{7}}- 1.0188\,e_{{8}},$$
$$w_2=2.0904\,e_{{1}}- 0.6762\,e_{{2}}- 0.6762\,e_{{3}}+
 1.612\,e_{{4}}- 0.097822\,e_{{6}}- 0.097822\,e_{{7}}+
 1.612\,e_{{5}}- 0.4144\,e_{{8}},$$

$$B_3=v_3\wedge w_3, \,\, v_3=2.1019\,e_{{3}}+ 2.1019\,e_{{4}}- 1.2991\,e_{{5}}-
 1.2991\,e_{{6}}- 4.112\,e_{{7}}+ 0.97071\,e_{{8}},
$$
$$w_3=- 0.58484\,e_{{1}}+ 1.9991\,e_{{2}}+ 1.9991\,e_{{3}}+
 1.125\,e_{{4}}- 3.0558\,e_{{6}}- 3.0558\,e_{{7}}+
 1.125\,e_{{5}}+ 0.72138\,e_{{8}},
$$
and
$$B_4=v_4\wedge w_4, \,\, v_4=0.58806\,e_{{3}}+ 0.58806\,e_{{4}}- 0.36344\,e_{{5}}-
0.36344\,e_{{6}}+ 0.78698\,e_{{7}}- 0.18578\,e_{{8}},
$$
$$w_4=0.85493\,e_{{1}}+ 0.55928\,e_{{2}}+ 0.55928\,e_{{3}}-
0.31475\,e_{{4}}+ 0.16362\,e_{{6}}+ 0.16362\,e_{{7}}-
0.31475\,e_{{5}}- 0.038626\,e_{{8}}.
$$
In Clifford algebra, the Coxeter versor as the product of the simple roots therefore completely encodes the factorisation into orthogonal eigenplanes, as of course it must, as Geometric Algebra and the root system completely determine the geometry, without the need for artificial complexification.

\section{Conclusions}\label{concl}
We have shown that with the help of Clifford algebra \textbf{all} exceptional root systems can in fact be constructed from the 3D root systems alone. 
This offers a revolutionarily new way of viewing these phenomena in terms of spinorial geometry of 3D as intrinsically 3D phenomena, with huge potential implications for the many areas in which these symmetries appear.
Likewise, the geometry of the Coxeter plane is best viewed in a Clifford algebra framework, which provides geometric meaning and insight, for instance the complex eigenvalues in standard theory are just seen to be rotations in eigenplanes of the Coxeter element given by bivectors which act as unit imaginaries.
Since the definition of root systems stipulates a vector space with an inner product, the associated Clifford algebra of this space can easily be constructed without any loss of generality and is actually the most natural framework to use for such root systems and Coxeter groups, both since Clifford algebra affords a uniquely simple reflection formula and since combining the linear structure of the space with the inner product is better than using both separately. 

\section*{Acknowledgements}\label{ack}
I would like to thank David Hestenes and his wife Nancy for their support over the years and for inviting me to spend half a year with them at ASU; my PhD advisor Anthony Lasenby and his wife Joan Lasenby for all their support over more than a decade;  Eckhard Hitzer for his constant support and encouragement, and likewise Sebasti\`a Xamb\'o, in particular for organising a splendid conference and for suggesting (with David Hestenes) a revised and more detailed writeup for the proceedings.

\bibliographystyle{plain}
%\bibliography{/Users/ppd22/Dropbox/work_share/virus,/Users/ppd22/Dropbox/YCCSA/virobib}

\end{document}